\title{A uniformness conjecture of the Kolakoski sequence, graph connectivity, and correlations}
\newcommand\blfootnote[1]{%
  \begingroup
  \renewcommand\thefootnote{}\footnote{#1}%
  \addtocounter{footnote}{-1}%
  \endgroup
}
\begin{document}
\maketitle
{\centering Bobby Shen\footnote{Bobby Shen,  Department of Mathematics, Massachusetts Institute of Technology,
Cambridge, Massachusetts, USA, \url{runbobby@mit.edu}} \par}

\newtheorem{theorem}{Theorem}[section]
\newtheorem{corollary}{Corollary}[theorem]
\newtheorem{lemma}[theorem]{Lemma}
\newtheorem{proposition}[theorem]{Proposition}
\newtheorem{conjecture}[theorem]{Conjecture}
 
\theoremstyle{definition}
\newtheorem{definition}[theorem]{Definition}
 
\theoremstyle{remark}
\newtheorem*{remark}{Remark}

\blfootnote{2010 \textit{Mathematics Subject Classsifications}: 60GXX, 11B99, 11B37 \\ Keywords: Kolakoski sequence, Stochastic Process, Autocorrelation}

\begin{abstract}
The Kolakoski sequence is the unique infinite sequence with values in $\{1,2\}$ and first term $1$ which equals the sequence of run-lengths of itself; we call this $K(1,2).$ We define $K(m,n)$ similarly for $m+n$ odd. A well-known conjecture is that the limiting density of $K(1,2)$ is one-half. We state a natural generalization, the ``generalized uniformness conjecture'' (GUC). The GUC seems intractable, but we prove a partial result: the GUC implies that members of a certain family of directed graphs $G_{m,n,k}$ are all strongly connected. We prove this unconditionally.

For $d>0,$ let $cf(m,n,d)$ be the density of indices $i$ such that $K(m, n)_i=K(m, n)_{i+d}.$ Essentially, $cf(m, n, d)$ is the autocorrelation function of a stationary stochastic process with random variables $\{X_t\}_{t\in\mathbb{Z}}$ whereby a sample of a finite window of this process is formed by copying as many consecutive terms of $K(m,n)$ starting from a ``uniformly random index'' $i\in\mathbb{Z}_+.$ Assuming the GUC, we prove that we can compute $cf(m,n,d)$ exactly for quite large $d$ by constructing a periodic sequence $S$ of period around $10^{8.5}$ such that for $d$ not too large, the correlation frequency at distance $d$ in $K(m,n)$ equals that in $S.$ We efficiently compute correlations in $S$ using polynomial multiplication via FFT. 

We plot our estimates $cf(m,n,d)$ for several small values of $(m,n)$ and $d\le10^5$ or $10^6$. We note many suggested patterns. For example, for the three pairs $(m,n)\in\{(1,2),(2,3),(3,4)\},$ the function $cf(m,n,d)$ behaves very differently as we restrict $d$ to the $m+n$ residue classes $\text{mod}$ $m+n.$ The plots of the three functions $cf(1,2,d),cf(2,3,d),$ and $cf(3,4,d)$ resemble waves which have common nodes. We consider this very unusual behavior for an autocorrelation function. The pairs $(m,n)\in\{(1,4),(1,6),(2,5)\}$ show wave-like patterns with much more noise.
\end{abstract}

\section{Introduction}
The Kolakoski sequence is the unique infinite sequence with values in $\{1, 2\}$ and first two terms $1, 2, \ldots$ which equals the sequence of run-lengths in the run-length encoding of itself. See \cite{RLE} for our definition of run-length encoding. The existence and uniqueness is relatively easy to prove. The Kolakoski sequence begins $1, 2, 2, 1, 1, 2, 1, 2, 2, 1, 2, 2, 1, 1, 2, \ldots.$ Similarly, for distinct positive integers $m, n,$ we define $K(m,n)$ to be the unique infinite sequence with values in $\{m, n\}$ and first term $m.$ 

The sequences $K(m,n)$ in which $m+n$ is even are less interesting. The main reason is that the infinite sequence $K(m,n)$ is a fixed point of a finite set of substitution rules. We define even-length finite sequences $K_i$ recursively as follows. Let $K_0$ be the sequence $m, m$ if $m>1$ or $m, n$ or $m = 1.$ For $i \ge 0,$ partition $K_i$ into blocks of length 2. Replace $m,m$ with $m^m n^m,$ or the term $m$ repeated $m$ times followed by the term $n$ repeated $m$ times. In order, Replace blocks which are $m, n$ with $m^m n^n.$ Replace $n, n$ with $m^n n^n.$ One can show that the block $n, m$ does not occur. Let $K_{i+1}$ be the new sequence. Note that since $m+n$ is even, the substitution rules turns one block into a finite number of blocks. 

For example, if $m = 1, n = 3,$ then $K_0 = 1, 3.$ $K_1 =1, 3, 3, 3.$ $K_2 = 1, 3, 3, 3, 1, 1, 1, 3, 3, 3.$ 

One can show that $K_i$ agrees with the first $|K_i|$ terms of $K_{i+1}$ and also the first $|K_i|$ terms of $K_{m, n}.$ In this sense, $K_{m, n}$ is the coherent union of the $K_i.$ As such, the density of $m$ and $n$ is readily calculated using linear algebra, and general density questions should also be straightforward. There appears to be no analog at all for the odd $m+n$ case.

There are many open problems associated with the Kolakoski sequence. Perhaps the most famous conjecture is that the limiting density of ``$1$'' in the Kolakoski sequence equals one-half. In this paper, ``density'' refers to the \textit{asymptotic} density \cite{OEIS} of a certain set of indices as a subset of $\mathbb{Z}_+,$ in this case, the indices of terms which are $1.$ There are various density-type conjectures that one can formulate about the Kolakoski sequence. The density-type conjectures dealing with fixed-distance observations can be naturally generalized into one ``generalized uniformness conjecture,'' (GUC) which we formulate. In order to formulate the GUC, we need to define functions $E_{m, n}, C_{m, n}$ which are naturally encountered when studying iterated run-length encoding or expansion as in the Kolakoski sequence. We will be using the same definitions and conventions as in Shen\cite{Shen}.

The GUC is far out of reach, but we prove a partial result. For $m, n, k > 0$ with $m+n$ odd, we define $G_{m, n, k}$ to be a directed graph (or multigraph for $k=1$). Its vertex set is $\{m, n\}^k.$ It has directed edges from $t$ to $C_{m, n}(m, t)$ and from $t$ to $C_{m, n}(n, t)$  for all $t$ in $\{m, n\}^k.$ The GUC easily implies that the graph $G_{m, n, k}$ is strongly connected, meaning that there are directed paths between all ordered pairs of vertices. We prove that these graphs are indeed connected. 
We think that this result is conceptually important because if we replace the Kolakoski sequence with a ``random sequence expanded $k$ times,'' then the $GUC$ is satisfied in expectation. In particular, in the directed graphs $G_{m, n, k}$, all vertices have in-degree and out-degree equal to $2,$ so the nullspace of the directed graph laplacian is given by the uniform-weight vectors.

The rest of this paper discusses ``correlations'' between terms of the sequence $K(m, n)$ which are $d$ terms apart. For $d>0,$ we define $cf(m,n,d)$ to be the density of indices $i$ such that $K(m, n)_i = K(m, n)_{i+d},$ if it exists. Naturally, all of these quantities are unknown. However, under the assumption of the GUC, these quantities can be computed. We define $tf(m, n, d)$ to be the theoretical value of $cf(m,n,d)$ assuming the GUC.

 Here is an alternative interpretation. We can define stochastic processes $P_{m, n}$ for $m+n$ odd. Each process $P_{m,n}$ is a collection of random variables $\{X_t\}_{t \in \mathbb{Z}}$ with values in $\{m,n\},$ and it is stationary, meaning that for all $d>0, k, k' \in \mathbb{Z}$ the marginal joint distribution of $(X_k,X_{k+1}\ldots, X_{k+d})$ is exactly equal to the marginal joint distribution of $(X_{k'}, X_{k' + 1}, \ldots, X_{k' + d}).$ We specify $P_{m,n}$ by specifying its marginal joint distributions on all finite consecutive sets; specifically, the probability that 
 \[(X_k, X_{k+1}, \ldots, X_{k+d}) = (a_0, \ldots, a_d) \in \{m,n\}^{d+1}\]
is equal to the asymptotic density of indices $i \in \mathbb{Z}_+$ such that $K(m,n)_{i + j} = a_j$ for all $0 \le j \le d$ assuming the GUC. In other words, this is the asymptotic frequency of $(a_0, \ldots, a_d)$ as a consecutive subsequence of $K(m,n).$ For example, assuming the GUC, the sequence $1,1,2,2$ has asymptotic frequency $1/18$ in $K(1,2).$ Consequently, for all $k,$ the stochastic process $P_{1,2}$ satisfies
\[ Pr[(X_k, \ldots, X_{k+3}) = (1,1,2,2)] = 1/18.\]
Under this interpretation, $tf(m,n,d)$ is the series of autocorrelations of the stationary stochastic process $P_{m,n}$. 

We will explain how to compute these frequencies assuming the GUC. Note that the stochastic processes $P_{m,n}$ exist without assuming the GUC. The statement that depends on the GUC is that these stochastic processes reflect the generalized Kolakoski sequences. 

We compute our algorithms for computing many exact and estimated values of $tf(m,n,d).$ We show many plots of these estimated values for $d \le 10^5$ and note many compelling patterns. We think that the series $tf(m,n,d)$ exhibit patterns that are very unusual for autocorrelation functions. 


This paper uses the same conventions as Shen\cite{Shen} as well as the definitions of $E_{m, n}$ and $C_{m, n}.$ 

\subsection{Outline}
In section 2, we formulate the GUC and prove that certain related directed graphs are strongly connected. In section 3, we prove a theorem that allows one to compute the frequency of any finite sequence as a consecutive subsequence of $K(m, n)$, assuming the GUC. In turn, this allows one to compute $cf(m, n, d)$ assuming the GUC. In section 4, we describe our algorithms for somewhat efficiently computing $cf(m,n,d)$, especially for $m+n \le 7.$  In section 5, we provide many plots of computed values of $cf(m, n, d)$ and discuss many striking patterns.

\section{The Generalized Uniformness Conjecture}\label{sec-guc}
This section makes extensive use of the functions $E_{m, n}$ and $C_{m, n}$. Both of these are functions which map pairs of sequences to pairs of sequences. We also use the fact that $C_{m, n}(s, t)$ always has the same length as $t.$ We recommend reading section 2 of Shen\cite{Shen}.

We first discuss another way to write $K(m, n)$ which naturally follows from the fact that $K(m, n) = E(K(m, n), m).$ By induction, we have for all $k>0$ that $ K(m, n) = E(K(m, n), m^k).$ In turn, 
\[ K(m, n) = E(K(m, n)_1 K(m, n)_2 K(m, n)_3 \ldots, m^k),\] where we tautologically regard $K(m, n)$ on the right hand side as an infinite concatenation of length-one sequences.

By identity $(1)$ in \cite{Shen}, we can expand the right hand side as 
\[K(m, n) =  E(K(m, n), m^k) = E(K(m, n)_1, t^{(1)}) E(K(m, n)_2, t^{(2)})E(K(m, n)_3 t^{(3)}) \cdots, \]
where we inductively define $t^{(1)} = m^k$ and for $i>0,$ $t^{(i+1)} = C(K(m, n)_i, t^{(i)}).$ Since $C_{m, n}(s, t)$ has the same length as $t,$ all of the sequences $t^{(i)}$ have length $k,$ so there are only $2^{k+1}$ possible values. In addition, $K(m, n)_i$ is a sequence of length one, so there are a total of $2^{k+1}$ possible values of the pair $(K(m, n)_i, t^{(i)}).$ In this sense, we have expressed the infinite sequence $K(m, n)$ as a concatenation of $2^{k+1}$ types atomic sequences. Perhaps a natural conjecture to ask is the following: the generalized uniformness conjecture.

\begin{conjecture}[Generalized Uniformness Conjecture (GUC)] \label{uniform}  Let $m, n>0, k \ge 0$ with $m+n$ odd. Define the sequences $t^{(i)}$ for $i \ge 1$ recursively by $t^{(1)} = m^k$ and $t^{(i+1)} = C(K(m, n)_i, t^{(i)}).$ 

 Let $x$ be in $\{m, n\}$, and let $t$ be in $\{m, n\}^k.$ Let $S$ be the subset of $\mathbb{N}$ of all $i$ such that $K(m, n)_i = x$ and $t^{(i)} = t.$ Then the \textit{asymptotic density} of $S$ equals $2^{-k-1}.$  
 \end{conjecture}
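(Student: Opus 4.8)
The plan is to recast Conjecture \ref{uniform} as an equidistribution statement for the deterministic walk on $G_{m,n,k}$ obtained by reading off $K(m,n)$. The sequence of states $t^{(1)}, t^{(2)}, \ldots$ is exactly the trajectory that, at step $i$, leaves vertex $t^{(i)}$ along the out-edge labelled $K(m,n)_i \in \{m,n\}$ and lands at $t^{(i+1)} = C_{m,n}(K(m,n)_i, t^{(i)})$. The pair $(K(m,n)_i, t^{(i)})$ records both the current vertex and the edge traversed, so the conjecture asserts that each of the $2^{k+1}$ (vertex, out-edge) incidences is used with density $2^{-k-1}$. Before attacking this I would record the base case $k=0$: there $\{m,n\}^0$ is a single point, $t^{(i)}$ is always empty, and the claim reduces to the assertion that $m$ and $n$ each occur in $K(m,n)$ with density $1/2$, i.e. the classical generalized density conjecture. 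Since that special case is itself open, no unconditional proof of the full statement can be expected, and the realistic target is to reduce the GUC to the density conjecture, or to isolate precisely the extra input beyond it that the higher levels demand.

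With that caveat, the core of the argument would be a fixed-point/self-consistency analysis that first pins down the uniform distribution as the only admissible limit. Because $G_{m,n,k}$ has all in-degrees and out-degrees equal to $2$, the operator that spreads mass equally along the two out-edges of each vertex is doubly stochastic, so the uniform measure on $\{m,n\}^k$ is stationary; by the unconditional strong connectivity of $G_{m,n,k}$ proved earlier, this stationary measure is unique. Hence if the empirical distribution of the states $t^{(i)}$ converges at all, and if out of each vertex the two labels $m,n$ are used with asymptotically equal frequency, the limit is forced to be the uniform one. To anchor this rigidity I would exploit the self-similarity $K(m,n) = E_{m,n}(K(m,n), m^k)$ and the decomposition of $K(m,n)$ into atoms $E_{m,n}(K(m,n)_i, t^{(i)})$: tracking how the density of each level-$k$ atom is generated by the densities of atoms at the adjacent level should yield a closed linear system (relating level-$k$ atom densities to run statistics of $K(m,n)$) whose only solution compatible with the stationarity above is uniform.

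The hard part, and the reason the GUC is out of reach, is upgrading stationarity into genuine convergence of empirical frequencies. The walk on $G_{m,n,k}$ carries no external randomness: the labels $K(m,n)_i$ are not independent fair coins but are dictated by the very sequence whose statistics we are trying to control, so there is no ergodic theorem or mixing estimate to invoke directly. Concretely, ``each out-edge is used with density $1/2$ at each vertex'' is the level-$k$ analogue of the density conjecture, and establishing it seems to require exactly the decorrelation between the symbol $K(m,n)_i$ and its induced carry $t^{(i)}$ that the self-referential definition of $K(m,n)$ withholds. I expect this circularity to be the decisive obstruction: the self-consistency equations determine the answer up to the twin assumptions that the limiting densities exist and that symbols decorrelate from states, but proving that decorrelation unconditionally is equivalent to a strengthening of the open density conjecture. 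This is why we state the GUC as a conjecture and instead prove the weaker unconditional consequence, namely the strong connectivity of $G_{m,n,k}$ which the above plan already takes as input.
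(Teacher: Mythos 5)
You were asked to prove Conjecture \ref{uniform}, which the paper itself states as an open conjecture and never proves --- at $k=0$, $m=1$, $n=2$ it reduces to the classical open conjecture that the density of $1$ in the Kolakoski sequence is $1/2$, as the paper notes --- so your refusal to manufacture a proof is the correct outcome. Your analysis matches the paper's own framing in Section \ref{sec-guc} essentially point for point: interpreting $t^{(1)}, t^{(2)}, \ldots$ as a deterministic walk on $G_{m,n,k}$ with edge labels read off $K(m,n)$, observing that in- and out-degrees all equal $2$ so the uniform measure is the unique stationary distribution once strong connectivity is known, identifying the decorrelation of symbols from carry states as the genuine obstruction, and isolating the strong connectivity of $G_{m,n,k}$ (the paper's Theorem \ref{connected}) as the unconditional partial result that the GUC implies.
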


For $k=0, m = 1, n = 2$ this conjecture reduces to the classical conjecture that the limiting density of $1$ in the Kolakoski sequence equals $1/2.$

We now present a rather small partial result for the GUC. Recall that  for $i>0,$ $t^{(i+1)} = C(K(m, n)_i, t^{(i)}).$ There are only two possible values for $t^{(i+1)}$ given $t^{(i)}.$ It is natural to define a directed graph whose edges are the two possible choices. This is formalized in the following definition.

\begin{definition} For $m, n, k > 0$ with $m+n$ odd, we define $G_{m, n, k}$ to be a directed graph (multigraph if $k = 1$). Its vertex set is $\{m, n\}^k.$ For all $t$ in $\{m, n\}^k,$ the graph has an edge from $t$ to $C_{m, n}(m, t)$ and $C_{m, n}(n, t),$ and these are all of the edges. \end{definition}

For fixed $m, n, k,$ the sequence $t^{(1)}, t^{(2)}, \ldots $ is an infinite walk on the directed graph $G_{m, n, k}.$ The GUC implies that the density of any fixed $t$ in $\{m, n\}^k$ among this infintie sequence is $2^{-k}.$ In particular, this implies that $G_{m, n, k}$ is strongly connected, meaning that there are directed paths from any starting vertex to any ending vertex. We now prove this fact unconditionally.

\begin{theorem} \label{connected} The directed graph $G_{m, n, k}$ is strongly connected. \end{theorem}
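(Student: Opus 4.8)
The plan is to make the two edge maps explicit and then recast strong connectivity as a transitivity statement about a permutation group. Writing a vertex $t\in\{m,n\}^k$ as $t=(a,u)$ with $a=t_1$ and $u=(t_2,\dots,t_k)$, and abbreviating $\phi_x:=C_{m,n}(x,\,\cdot\,)$, I would first use the defining expansion identity for $E_{m,n}$ together with the carry identities $C_{m,n}(s,t\,t')=C_{m,n}(s,t)\,C_{m,n}\!\bigl(E_{m,n}(s,t),t'\bigr)$ and $C_{m,n}(s\,s',t)=C_{m,n}\!\bigl(s',C_{m,n}(s,t)\bigr)$ to derive the recursion
\[ \phi_x(t)=\bigl(\bar a,\ \phi_a^{\,x}(u)\bigr), \]
where $\bar a$ is the other symbol and $\phi_a^{\,x}$ is the level-$(k-1)$ map $\phi_a$ iterated $x$ times; this is readily checked against the first terms of $K(m,n)$. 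Thus an edge flips the first coordinate and hits the tail with $x$ iterations of the smaller map indexed by the \emph{old} first coordinate. A one-line induction on $k$ then shows each of $\phi_m,\phi_n$ is a bijection of $\{m,n\}^k$. Since $G_{m,n,k}$ is therefore generated by two permutations of finite order, each generator's inverse is a positive power of it, so the set reachable from a vertex equals its orbit under $G:=\langle\phi_m,\phi_n\rangle$; hence the strongly connected components are exactly the $G$-orbits, and the theorem is equivalent to $G$ acting transitively on $\{m,n\}^k$.

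I would prove transitivity by induction on $k$, fibering over the first coordinate via the fibers $F_m=\{m\}\times\{m,n\}^{k-1}$ and $F_n=\{n\}\times\{m,n\}^{k-1}$. Both generators flip the first coordinate, so $G$ surjects onto $\mathbb{Z}/2$ and it remains to show the even-word subgroup $G_0$ acts transitively on $F_m$. A two-step computation gives $\phi_y\phi_x(m,u)=\bigl(m,\ \phi_n^{\,y}\phi_m^{\,x}(u)\bigr)$, so the fiber action of $G_0$ is generated by $\{\phi_n^{\,y}\phi_m^{\,x}:x,y\in\{m,n\}\}$ on $\{m,n\}^{k-1}$. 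The main obstacle lives precisely here: I only have the iterates $\phi_a^{\,m},\phi_a^{\,n}$, not $\phi_a$ itself, so I cannot directly invoke the inductive transitivity of $\langle\phi_m,\phi_n\rangle$ at level $k-1$. Suitable products of the four generators do, however, produce $\phi_m^{\,n-m}$ and $\phi_n^{\,n-m}$ inside the fiber group, and since $m+n$ is odd the exponent $d=n-m$ is odd.

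The linchpin that dissolves this obstacle is a lemma, proved by a parallel induction: \emph{at every level $k$, the group $\langle\phi_m,\phi_n\rangle$ is a finite $2$-group.} Indeed, its even-word subgroup preserves each fiber and so embeds into $\mathrm{Sym}(F_m)\times\mathrm{Sym}(F_n)$ by restriction; each restriction is a product of tail maps $\phi_n^{\,y}\phi_m^{\,x}$ (respectively $\phi_m^{\,y}\phi_n^{\,x}$), hence lands in the level-$(k-1)$ group, which is a $2$-group by induction. A subgroup of a product of two $2$-groups is a $2$-group, and an index-$\le 2$ overgroup of a $2$-group is a $2$-group, closing this induction. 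Granting the lemma, $\phi_m$ and $\phi_n$ at level $k-1$ have $2$-power order, so the odd $d$ is coprime to their orders and $\langle\phi_m^{\,d}\rangle=\langle\phi_m\rangle$, $\langle\phi_n^{\,d}\rangle=\langle\phi_n\rangle$. Therefore the fiber group contains all of $\langle\phi_m,\phi_n\rangle$ at level $k-1$, which is transitive by the inductive hypothesis; so $G_0$ is transitive on each fiber and $G$ is transitive on $\{m,n\}^k$, completing the induction and the proof.

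As a cross-check and possible alternative, I would note that the intro already records that every vertex has in-degree and out-degree $2$, so $G_{m,n,k}$ is balanced; a balanced digraph is strongly connected as soon as it is weakly connected, since it then carries an Euler circuit meeting every vertex. This reduces the goal to weak connectivity, but the same exponent obstruction resurfaces and is again resolved only through the $2$-group lemma, so I expect the transitivity argument above to be the cleanest self-contained route.
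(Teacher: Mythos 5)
Your proposal is correct, but it takes a genuinely different route from the paper's. The paper also inducts on $k$, but it works at the level of edge-label sequences: a path from $t$ to $u$ is a word $s$ with $u = C(s,t)$, identity $(4)$ reduces this to building $r = E(s,t_1)$ --- a tail path subject to the extra constraint that its run-length encoding $R(r)$ have values in $\{m,n\}$ --- and the paper obtains an unconstrained tail path from the inductive hypothesis and then repairs it by sequence surgery, inserting blocks that act as the identity permutation ($m^K$ and $n^K$ with $K=2^k$, then $\bigl(\ell^m(m+n-\ell)^m\bigr)^K$), with a case split on whether $t_1 = u_1$. You instead recast strong connectivity as transitivity of $\langle \phi_m,\phi_n\rangle$; your skew-product recursion $\phi_x(a,u) = \bigl(\bar a, \phi_a^x(u)\bigr)$ (which is exactly identity $(4)$ combined with $C(x,a)=m+n-a$, $E(x,a)=a^x$, and identity $(3)$) turns the paper's run-length constraint into your observation that the even subgroup acts on a fiber only through the maps $\phi_n^y\phi_m^x$ --- the same obstruction in two guises. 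Both resolutions then rest on the same arithmetic: $2$-power orders plus oddness of $n-m$. The paper cites Shen's orbit result for the former, whereas you reprove it as your $2$-group lemma, whose induction (the even subgroup embeds in $H_{k-1}\times H_{k-1}$ by restriction to the two fibers, and has index at most $2$) is sound; your remaining steps also check out, e.g.\ $\phi_m^{n-m} = (\phi_n^m\phi_m^m)^{-1}(\phi_n^m\phi_m^n)$ and $\phi_n^{n-m} = (\phi_n^n\phi_m^m)(\phi_n^m\phi_m^m)^{-1}$ lie in the fiber group, and coprimality gives $\langle\phi_a^{n-m}\rangle = \langle\phi_a\rangle$, so the fiber group contains the full level-$(k-1)$ group, which is transitive by induction. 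What the paper's approach buys is an explicit, constructive description of connecting paths; what yours buys is a cleaner and more self-contained argument --- no sequence surgery, no case split, no external citation for the $2$-power fact --- together with a strictly stronger structural conclusion, namely that the whole edge-map group is a transitive finite $2$-group acting on $\{m,n\}^k$.
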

\begin{proof}
Fix $m,n$ with $m+n$ odd. We now suppress the subscripts on $C_{m, n}$ and $E_{m, n}.$ We proceed by induction on $k.$ The base case, $k = 1,$ is trivial. $G_{m, n, k}$ is a multigraph with two vertices and two edges in each direction..

Now assume that $k \ge 1$ and $G_{m, n, k}$ is strongly connected. To show that $G_{m, n, k+1},$ it suffices to show that for fixed $t, u$ in $\{m, n\}^{k+1},$ there is a directed path from $t$ to $u.$ Let $t'$ be the sequence $t$ without its first term, and define $u'$ similarly. Then $t = t_1 t',$ and $u = u_1 u'.$

We are looking for a directed path in $G_{m, n, k+1}$ from $t$ to $u.$ A directed path is the same as applying either $C(m, -)$ or $C(n, -)$ repeatedly. By identity $(3)$, this is equivalent to finding a finite sequence $p$ with values in $\{m, n\}$ such that $u = C(p, t),$ where $p$ gives exactly the sequence of functions $C(m/n, -)$ applied.

By identity $(4)$ in Shen\cite{Shen}, the statement $u = C(s, t)$ is equivalent to the following
\begin{align*} u_1 u' &= C(s, t_1 t') \\ u_1 u'&= C(s, t_1) C(E(s, t_1), t') \\ u_1 &= C(s, t_1) \wedge u' = C(E(s, t_1), t'). \end{align*}

Instead, suppose that we were looking for the sequence $E(s, t_1).$ More formally, suppose that $r$ is a finite sequence with values in $\{m, n\}$ such that $R(r)$ (the run-length encoding of $r$) has values in $\{m, n\},$ $r_1 = t_1,$ $r_{|r|} \neq u_1,$ and $u' = C(r, t')$. Then defining $s:= R(r),$ we have $r = E(s, t_1),$ $u' = C(E(s, t_1), t'),$ and $C(s, t_1)$ is the complement of the last term of $E(s, t_1) = r,$ or $u_1$ by construction. We now construct $r.$

The condition that $R(r)$ has values in $\{m, n\}$ is the hardest, so we ignore that for now. Given $u', t'$ in $V(G_{m, n, k}),$ there exists a vertex $v$ such that $u' = C(m+n-u_1, v).$ (Note that $m+n-u_1$ is the complement of $u_1$.) This is because by Proposition 3.1 of Shen\cite{Shen}, the functions $C(m, -)$ and $C(n, -)$ are length-preserving bijections. On the other end, $C(t_1, t')$ is a vertex in $G_{m, n, k}.$ By the inductive hypothesis, there exists a directed graph from $C(t_1, t')$ to $v.$ In other words, there is a directed path from $t'$ to $u'$ such that the resulting sequence of ``labels,'' $p$, satisfies $u' = C(p, t')$, $p_1 = t_1,$ and $p_{|p|} \neq u_1.$ The sequence $p$ satisfies all of the conditions that we want $r$ to satisfy except that we require $R(r)$ to have values in $\{m, n\}.$ (The sequence of run-lengths of $p$ is arbitrary as far as we know.) We now adjust $p$ carefully.

WLOG, $n > m.$ (This is because $G_{m, n, k}$ is symmetric with respect to $m, n$.) In Shen\cite{Shen}, we also show that for any finite sequence $s$, $C(s, -)$ is a bijection on $\{m, n\}^{k}$; moreover, all of the orbit lengths are powers of two. Therefore, raising $C(s, -)$ to the power of $2^k$ yields the identity map on $\{m, n\}^k.$ Let $K:= 2^k.$ Then $C(s^K, -)$ is a bijection on $\{m, n\}^k.$ Observe that $K$ is a power of $2,$ and $n-m$ is odd, so $\gcd(K, n-m) = 1.$ Therefore, we can insert finitely many copies of $m^K$ or $n^K$ in between consecutive terms of $p$ to yield a sequence $p'$ such that all run-lengths in $p'$ are multiples of $n-m.$ We do not indernt $m^K$ or $n^K$ at the beginning or end. For example, if $n-m = 3$, $K = 4,$ and $p = n, m, m, m, m, n, n,$ one choice is $p' = n \ n^K \ n^K \ mmmm \ m^K \ m^K \ n^K \ nn.$ 

It turns out that we need two cases.

 \textit{Case 1:} $t_1 = u_1.$ Then the first and last terms of $p$ are not equal, and likewise for $p'.$ Let $q = p'.$ Let $\ell$ be the first term of $q.$ Partition $q$ into consecutive blocks of length $n-m.$ By construction, each block is either all $m$ or all $n.$ Define $r$ to be the sequence formed by inserting, between any two consecutive blocks (but not the beginning or end), a copy of 
 \[ \left(\ell^{m}(m+n-\ell)^{m} \right)^K. \] 
 
 We claim that $r$ satisfies the desired conditions. We still have $r_1 = t_1, r_{|r|} \neq u_1$ because we have not added to the beginning nor the end of $p$ to form $r.$ We still have that $r$ has values in $\{m, n\}.$ We still have $C(r, u') = t'$ because originally, $C(p, u') = t',$ and we have only inserted blocks that are the identity permutation on $\{m, n\}^k.$ Finally, consider the sequence of run lengths of $r.$ The first and last run lengths are $n$ because the first term of $q$ is $\ell,$ and the last term of $q$ is not $\ell.$ Most of the other run lengths are $m.$ The possible deviations are among the interior blocks of $q.$ However, each interior block of $q$ forms a complete run with either the $\ell^m$ succeeding it or the $(m+n-\ell)^m$ preceding it, for a total length of $n.$ Therefore, $R(r)$ has values in $\{m, n\},$ as desired.
 
 Here is an example. Suppose that $n = 3, m = 2, K = 2, q = 2,2,3,2,3.$  We insert $(2^2 3^2)^2$ between consecutive ``blocks'' of q (the blocks have length $1$), and we get 
 \[r = 2 \ 22332233 \ 2 \ 22332233 \ 3 \ 22332233 \ 2 \ 22332233 \ 3.\]

\textit{Case 2:} $t_1 \neq u_1.$ The first and last terms of $p$ are equal. From the example, it is clear that we need a slightly different adjustment.

Let $\ell$ be the last term of $p,$ which equals the last term of $p',$ and form the sequence $q$ by inserting a few more copies of $\ell^K$ before the last term of $p'$ so that the last run length of $q$ is congruent to $m \pmod{n-m}$ and also at least $m.$ Partition $q$ into blocks such that the last block has length $m,$ and all other blocks have length $n-m.$ Again, each block is either all $m$ or all $n.$ Define $r$ to be the sequence formed by inserting, betwee, any two consecutive blocks, a copy of 
 \[ \left(\ell^{m}(m+n-\ell)^{m} \right)^K. \] 
 It is easy to see that $r$ has the desired properties. 
 
 Here is an example. Suppose that $n = 3, m = 2, K = 2, q = 2,2,3,2,2,2.$ We get
 \[r = 2 \ 22332233 \ 2 \ 22332233 \ 3 \ 22332233 \ 2 \ 22332233 \ 22.\]

In either case, the sequence $s: = r$ satisfies $C(s, t) = u.$
\end{proof}

\section{Computing frequencies of subsequences in $K(m,n)$ assuming the GUC}
Let $m, n, d$ be positive integers with $m \neq n.$ We define $cf(m,n,d)$, or the correlation frequency in $K(m, n)$ at distance $d$, to be the asymptotic density of the set of indices $i$ such that $K(m, n)_i = K(m, n)_{i+d},$ if this limit exists.  If $cf(m, n, d)$ exists, it lies in $[0, 1]$.

Given that when $m+n$ is even, $K(m, n)$ is described by substitution rules, we expect $cf(m, n, d)$ for $m+n$ to be less interesting and possibly nonexistent. Therefore, we restrict our attention to the case when $m+n$ is odd. When $m+n$ is odd, the author's intuition is that $K(m, n)$ is chaotic and that $cf(m, n, d)$ should exist. A priori, this correlation frequency function is not particularly interesting: we expect it to decay to $1/2$ somewhat but not too regularly. However, empirical computations suggest that this function is quite interesting.

The values $cf(m,n,d)$ are unknown, but they can be calculate assuming Conjecture \ref{uniform}. As an intermediate step, we calculate the density of all possible finite consecutive subsequences. For example, the density of $m,n,n$ as a consecutive subsequence in $K(m,n)$ is by definition the asymptotic density of the set $\{i \in \mathbb{Z}_+ : K(m,n)_i = m, K(m,n)_{i+1} = n, K(m,n)_{i+2}=n\}.$

\begin{proposition}\label{density-recursion} Let $m \neq n.$ Assume Conjecture \ref{uniform}. Let $s$ be a finite nonempty sequence with values in $\{m,n\}.$ Let $t = R(s),$ the sequence of run lengths of $s.$ Assume that $|t|\ge 2.$ (If $|t|=1,$ then this proposition doesn't apply.)

If the first or last values of $t$ are greater than $\max(m,n),$ then the density of $s$ in $K(m,n)$ is $0.$ If some term of $t$ besides the first or last is not in $\{m,n\},$ then the density of $s$ is $0.$ 

If neither of these is the case, then construct the sequence $u$ as follows. Start with $t.$ If the first value is at most $\min(m,n)$ then remove it. Otherwise, replace the first value with $\max(m,n).$ If the last value is at most $\min(m,n)$ then remove it. Otherwise, replace the last value with $\max(m,n).$ Let $u$ be the resulting sequence, which is possibly the empty sequence. Then the density of $s$ equals the density of $u$ times $(m+n)^{-1},$ where the density of the empty sequence is $1.$
\end{proposition}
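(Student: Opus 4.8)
The plan is to work with the self-similar description of $K(m,n)$ implied by $K(m,n)=E(K(m,n),m)$: reading off maximal runs, $K(m,n)$ is a concatenation of runs whose values alternate $m,n,m,n,\dots$ and whose lengths are $K(m,n)_1,K(m,n)_2,\dots$. Thus the $j$-th run has value $v_j$ (namely $m$ if $j$ is odd and $n$ if $j$ is even) and length $K(m,n)_j$; crucially, the sequence of run \emph{lengths} is $K(m,n)$ again. Any occurrence of $s$ must align its runs with those of $K(m,n)$, and since $|t|\ge 2$ the interior runs of $s$ (lengths $t_2,\dots,t_{|t|-1}$) must be \emph{complete} runs of $K(m,n)$ — each internal run-transition of $s$ must fall on a run boundary of $K(m,n)$ — while its first and last runs may be a suffix, respectively a prefix, of a run. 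A complete run has length in $\{m,n\}$ and a partial run has length at most $\max(m,n)$, which is exactly the two vanishing cases: if some interior $t_i\notin\{m,n\}$, or if $t_1$ or $t_{|t|}$ exceeds $\max(m,n)$, no alignment exists and the density is $0$.

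In the remaining case I would pin down each occurrence of $s$ by the index $j$ of the run containing its first run. Such an occurrence exists precisely when (L) $K(m,n)_j\ge t_1$, (I) $K(m,n)_{j+i-1}=t_i$ for $2\le i\le|t|-1$, (R) $K(m,n)_{j+|t|-1}\ge t_{|t|}$, and the value condition (V) $v_j=s_1$ all hold; conversely these pin the occurrence down uniquely. The construction of $u$ is designed precisely to convert (L),(I),(R) into a single shifted occurrence of $u$: when $t_1\le\min(m,n)$ condition (L) is automatic and run $j$ is unconstrained, so $u$ omits this entry and begins at run $j+1$; when $t_1>\min(m,n)$ the aligned run is forced to have length $\max(m,n)$, so $u$ records $\max(m,n)$ there. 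The right end is handled symmetrically. Hence (L),(I),(R) hold at $j$ if and only if $u$ occurs at position $j$ or $j+1$ — a constant shift — so the density of run-indices satisfying (L),(I),(R) equals the density of $u$ as a consecutive subsequence of $K(m,n)$.

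It then remains to combine this with two multiplicative factors. Passing from a density measured over run-indices to one measured over term-positions costs the reciprocal of the average run length: by Conjecture~\ref{uniform} with $k=0$ the values $m,n$ each have density $\tfrac12$, so the average run length is $\tfrac{m+n}{2}$ and the first $N$ terms contain asymptotically $\tfrac{2}{m+n}N$ runs. Imposing the value condition (V), which after the constant shift is simply a fixed parity constraint on the starting run of $u$, should cost a further factor of $\tfrac12$. Multiplying these with the density of $u$ would give $\mathrm{density}(s)=\tfrac{2}{m+n}\cdot\tfrac12\cdot\mathrm{density}(u)=\tfrac{1}{m+n}\,\mathrm{density}(u)$, where the density of the empty sequence is taken to be $1$.

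The hard part is justifying the factor $\tfrac12$ attached to (V): that across the occurrences of the fixed window $u$, the value — equivalently the parity — of the starting run is asymptotically \emph{independent} of $u$. This is exactly where Conjecture~\ref{uniform} is essential rather than cosmetic. The GUC asserts that the expansion state $t^{(i)}$, which records the run value $v_i$, is uniformly distributed on $\{m,n\}^k$ and decoupled from the local term $K(m,n)_i$; the real content of this step is to propagate that single-site uniformity to the entire window of terms that governs an occurrence of $u$, so that exactly half of those occurrences start on a run of value $s_1$. By contrast the factor $\tfrac{2}{m+n}$ and the bijective bookkeeping between $s$ and $u$ are routine, so the equidistribution of the starting value over occurrences of a fixed window is the crux of the argument.
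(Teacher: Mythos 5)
Your proposal is correct and takes essentially the same route as the paper's own (sketched) proof: your run-index conditions (L),(I),(R),(V) play the role of the paper's almost-bijection between occurrences of $s$ and occurrences of the ``full-run'' sequence $s'$ with $R(s')=u$, your parity-equidistribution step is exactly the paper's claim that, by Conjecture \ref{uniform}, $s'$ and its complement occur with equal density as subsequences of full runs, and your factor $\tfrac{2}{m+n}$ is the paper's index rescaling $i \mapsto i(m+n)/2 + o(i)$. Both arguments identify the same crux and leave the same detail unproven (propagating the GUC's single-site uniformity to a whole window governing an occurrence of $u$), which the paper, whose proof is explicitly labeled a sketch, likewise omits.
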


\begin{proof}[Proof. (Sketch)] The second paragraph is straightforward: recall that $R(K(m,n)) = K(m,n).$ If $s$ is a subsequence of $K(m,n),$ then we know for sure that $C(s)$ excluding the first and last terms of $C(s)$ is a subsequence of $K(m,n).$ Moreover, there exists some sequence $v$ which is almost equal to $C(s)$ except the first and last terms of $v$ may be greater than the respective terms in $C(s)$ such that $v$ is a subsequence of $K(m,n).$ 

Now assume that $t$ does not satisfy the hypotheses in the second paragraph. Using the identity $K(m,n) = E(K(m,n), m),$ we have a natural bijection between terms of $K(m,n)$ on the right hand side and full runs of $K(m,n)$ on the left hand side. For example, with $K:= K(1,2) = 1,2,2,1,1,2,1,2,2,1,2, \ldots,$ the term $K_6 = 2$ corresponds to the full run $K_8,K_9.$ The term $K_7=1$ corresponds to the full run $K_{10}.$ We also have a bijection between subsequences on the right and subsequences of full runs on the left. Note that $K_7,K_8$ is a subsequence of runs, but not a subsequence of full runs. 

We can't apply this directly to $s$ since $s$ is not a subsequence of full runs: it may have partial runs on the beginning/end. However, almost every occurrence of $s$ in $K(m,n)$ can be associated bijectively with an occurrence of $s',$ where $s'$ is a sequence of full runs. We form $s'$ as follows. Start with the sequence $s.$ If the first run of $s$ has length at most $\min(m,n),$ then remove it. Otherwise, replace the first run with a run of length $\max(m,n)$ of the same value. If the last run of $s$ has length at most $\min(m,n),$ then remove it. Otherwise, replace the last run with a run of length $\max(m,n)$ of the same value. Let $s'$ be the resulting sequence. Note that by construction, $R(s') = u.$

We claim that there is an almost-bijection between occurrences of $s$ in $K(m,n)$ and occurrences of $s'$ in $K(m,n)$ in which we require $s'$ to be a sequence of full runs. (There is possibly one unpaired occurrence. This discrepancy disappears in the limit.) The idea is that if $v$ is any sequence of full runs in $K(m,n),$ such as $1,2,2,1,2,2$ in $K(1,2),$ then we know that usually, $v$ is bookended by runs of length $\min(m,n)$ after "flipping the value" appropriate value, so $1,2,2,1,2,2$ would become $2,1,2,2,1,2,2,1.$ The possible exception is when $v$ is at the beginning of $K(m,n).$ Note that $v$ cannot be contained in $1,1,2,2,1,2,2,1$ because by assumption, $v$ is a sequence of full runs. This step uses the assumption that $|t| \ge 2.$ We omit the details.

Next, observe that there is a bijection between occurrences of $s'$ or the complement of $s'$ in $K(m,n)$, both required to be subsequences of full runs, and occurrences of $u$ as a subsequence in $K(m,n).$ Using Conjecture \ref{uniform}, one can show that the density of $s'$ and the density of the complement of $s',$ both as subsequences of full runs, are equal. Lastly, we must take into account that index $i$ in $K(m,n)$ "on the right" does not correspond to index $i$ in $K(m,n)$ "on the left," but rather to index $ i (m+n)/2 + o(i)$ as $i \rightarrow \infty.$
\end{proof}


We now have a prescription for computing $cf(m,n,d)$ assuming Conjecture \ref{uniform}. To avoid confusion, we will define $tf(m, n, d)$ to denote the theoretical correlation frequency assuming Conjecture \ref{uniform}. We define the \textit{uniform frequency} of a finite sequence in $K(m, n)$ to be the density assuming Conjecture \ref{uniform}. As opposed to frequencies, uniform frequencies can be calculated exactly according to Proposition \ref{density-recursion} and with the appropriate base cases. 

\begin{definition} For $m, n, d > 0$ with $m+n$ odd, $tf(m,n,d)$ is the correlation frequency of terms with indices $d$ apart in $K(m,n)$ assuming Conjecture \ref{uniform}. For finite sequences $u,$ the \textit{uniform frequency} of $u$ is its frequency as a consecutive subsequence in $K(m,n)$ assuming Conjecture \ref{uniform}. \end{definition}

We compute the uniform density of finite sequences $s$ with values in $\{m,n\}$ recursively based on the length of $s.$ Our base cases are $|s| = 0$ and $|R(s)| = 1$. To deal with this case $|R(s)| = 1$, we observe that by Conjecture \ref{uniform}, $K(m,n)$ is composed of runs, of which one fourth are each $m^m, m^n, n^m,$ and $n^n.$ For the purposes of computing the density of $s$ with $|R(s)| = 1,$ we may assume that $K(m,n)$ is $m^mn^mm^nn^n$ repeated. Our recursive step is Proposition \ref{density-recursion}. Finally, we define $tf(m, n, d)$ to be the sum over all length $d+1$ sequences with equal first and last term of their uniform frequencies.

Propositon \ref{density-recursion} provides an efficient way to compute the uniform frequency of one sequence, but we now present a more efficient way to compute $tf(m,n,d)$ by describing an infinite, periodic sequence which approximates the Kolakoski sequence. Basically, for each $(m,n),$ we will construct a family of periodic sequences $S^{(i)}.$ We will show that there exists an exponentially growing sequence of integers $D_i$ such that for all $i$ and all sequences $u$ of length less than $D_i,$ the frequency of $u$ in $S^{(i)}$ equals the uniform frequency of $u.$ First, we will define a sequence $D_i.$ We do not claim that these $D_i$ are tight bounds.

\begin{definition}\label{di}Assume that $m,n$ are fixed with $m<n.$ (This loses no generality since $tf(m,n,d) = tf(n,m,d).$) We define the sequence $(D_i)_{i \in \mathbb{Z}_+}$ as follows. For each $i >0,$ $D_i$ is defined as the shortest length of a sequence which can be obtained in the following process.

Pick $t^{(1)}, t^{(2)},  \ldots  t^{(i)} \in \{m,n\}$ arbitrarily. Let $s^{(0)}$ be the empty sequence. For $j \ge 0,$ define $s^{(j+1)} = E(s^{(j)} m, t^{(j+1)}),$ where the first argument has $m$ appended to $s^{(j)}.$ The term $D_i$ is defined to be the minimum possible length of $s^{(i)}$ over the finite set of choices.
\end{definition}

\begin{remark}
For any single set of choices, $|s^{(j+1)}| \ge m + m |s^{(j)}|.$ Therefore, $D_i > m^i.$ One can also show an exponential lower bound on $D_i$ if $m = 1,$ which we omit. Intuitively, as $i$ becomes large, expanding a sequence $s$ by a single letter $t$ multiplies its length by approximately $(m+n)/2$ rather than $m,$ provided that the sequence $s$ itself is the result of several iterated expansions. (If $s$ is arbitrary, then $E(mmmm \ldots, m)$ provides a counterexample.) Of course, knowing this for sure is essentially as hard as proving that the density of $m$ in $K(m,n)$ equals $1/2,$ which is to say, very hard. However, heuristically, we expect $D^i$ to grow nearly and possibly as fast as $((m+n)/2)^i.$
\end{remark}

\begin{lemma} \label{first-terms} Let $m<n$ with $m+n$ odd. Fix $t$ be in $\{m,n\}^k.$ Let $y$ be an infinite sequence with values in $\{m,n\}.$ Define $D_i$ as in Definition \ref{di}. Then the first $D_i$ terms of $E(y,t)$ are independent of $y.$
\end{lemma}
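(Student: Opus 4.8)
The plan is to track, one expansion level at a time, the prefix of $E(y,t)$ that is forced regardless of $y$, and then to recognize that this forced prefix is produced by precisely the recursion defining $D_k$ in Definition \ref{di}. Throughout write $k=|t|$, so the bound I will establish is that the first $D_k$ terms of $E(y,t)$ are independent of $y$. The structural fact I rely on, from the conventions for $E_{m,n}$ in Shen\cite{Shen}, is that expanding with a seed of length $k$ amounts to $k$ successive single-letter expansions, where a single expansion $E(\cdot,v)$ reads its first argument as a list of run-lengths and outputs the sequence whose $j$-th run has that length and value alternating $v,\bar v,v,\ldots$ in $j$ (here $\bar v=m+n-v$). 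The crucial observation is that under a single-letter expansion the output run-\emph{values} alternate deterministically: they depend only on $v$ and the index $j$, not on the run-length values. Hence knowing a prefix of the input forces a prefix of the output, and even the first \emph{unknown} input run forces $m$ further output terms, since its value is already fixed by parity while its length lies in $\{m,n\}$ and is therefore at least $m$.

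I would make this precise by induction on the number of expansions. Let $w_0=y$ and let $w_j$ be the result of the first $j$ single-letter expansions, so $w_k=E(y,t)$. Let $g_j$ be a prefix of $w_j$ forced for all $y$, with $g_0$ empty. The inductive step is that $g_j:=E(g_{j-1}\,m,\ v_j)$ is forced, where $v_j$ is the seed letter used at level $j$ and $g_{j-1}m$ is $g_{j-1}$ with a run of length $m$ appended. To see this, write $w_{j-1}=g_{j-1}\,a\,(\text{rest})$ with $a:=(w_{j-1})_{|g_{j-1}|+1}\in\{m,n\}$; by identity $(1)$,
\[
E(w_{j-1},v_j)=E(g_{j-1},v_j)\cdot E\bigl(a\cdot(\text{rest}),\,C(g_{j-1},v_j)\bigr),
\]
and since $E(a,v')=(v')^{a}$ is a run of length $a\ge m$ whose value $v'=C(g_{j-1},v_j)$ is forced, the right-hand side begins with $E(g_{j-1},v_j)\cdot (v')^m=E(g_{j-1}\,m,\ v_j)$, again by identity $(1)$. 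This is exactly $g_j$, and it is independent of $y$ because $g_{j-1}$ is.

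Comparing this recursion with Definition \ref{di} finishes the argument: the recursion $g_j=E(g_{j-1}\,m,\,v_j)$ with $g_0$ empty is literally the recursion $s^{(j+1)}=E(s^{(j)}m,\,t^{(j+1)})$ once the seed letters $v_1,\ldots,v_k$ are matched to a tuple $(t^{(1)},\ldots,t^{(k)})\in\{m,n\}^k$, namely the letters of $t$ in the order the expansions are applied. Thus $g_k=s^{(k)}$ for that particular tuple, so the first $|g_k|=|s^{(k)}|$ terms of $E(y,t)$ are forced. Since $D_k$ is the minimum of $|s^{(k)}|$ over all tuples, $|g_k|\ge D_k$, and therefore the first $D_k$ terms of $E(y,t)$ are independent of $y$. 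Because $D_k$ is a minimum over all seed tuples, the exact correspondence — in particular whether the letters of $t$ are applied in forward or reverse order — is irrelevant to the bound.

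The main obstacle is the inductive step, and specifically explaining why a single unforced input run still forces $m$ output terms; the clean way to see it is the deterministic alternation of output run-values under single-letter expansion, which is exactly what makes the ``append a run of length $m$'' device in Definition \ref{di} tight rather than a loose over- or under-count. A secondary technical point is justifying that expansion by a length-$k$ seed decomposes into $k$ single-letter expansions and that identity $(1)$ applies at each level; both follow from the conventions of Shen\cite{Shen}, but must be invoked carefully so that the appended run of length $m$ receives the correct forced value $C(g_{j-1},v_j)$ at every step.
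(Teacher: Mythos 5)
Your proof is correct and takes essentially the same approach as the paper's: both induct on the number of single-letter expansions, maintaining a forced prefix via the recursion $g_j = E(g_{j-1}\,m,\ v_j)$ (the paper's $s^{(j+1)} = E(s^{(j)}m,\ t^{(j+1)})$), with the key step in both being that the first unknown input term has value at least $m$ (since $m<n$), so it forces $m$ further copies of the known value $C(g_{j-1},v_j)$ in the output. Your closing appeal to $D_k$ being a minimum over all seed tuples is likewise how the paper concludes.
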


\begin{proof} For $1 \le j \le i,$ let $t^{(j)} = t_j$. Define $s^{(0)}, \ldots, s^{(i)}$ as in Definition \ref{di}. We prove by induction on $j$ that for $0 \le j \le i,$ the first $|s^{(j)}|$ terms of infinite sequence $E(y, t_1t_2\cdots t_j)$ agree with $s^{(j)}.$ 

The base case, $j = 0,$ is vacuously true. Suppose that this is true for $j.$ We have 
\[E(y, t_1 t_2 \cdots t_j t_{j+1}) = E( E(y, t_1 t_2 \cdots t_j), t_{j+1}).\]
By the inductive hypothesis, the first $|s^{(j)}$ terms of $E(y, t_1 t_2 \cdots t_j)$ agree with $s^{(j)}.$  Therefore, the infinite sequence $E(y, t_1 t_2 \cdots t_j)$ has the form $s^{(j)} a_1 a_2 a_3 \cdots,$ where $a_1, \ldots$ are arbitrary elements of $\{m,n\}$ The expansion by $t_{j+1}$ has the form
\[ E(s^{(j)}, t_{j+1}) E(a_1, C(s^{(j)}, t_{j+1})) \cdots.\]
In particular, no matter what $a_i$ is, the sequence $ E(a_1, C(s^{(j)}, t_{j+1}))$ begins with $m$ copies of the term $ C(s^{(j)}, t_{j+1})$ (since $m<n$). Equivalently, the initial segment of the sequence $E(y, t_1 t_2 \cdots t_{j+1})$ agrees with $E(s^{(j)} m, t_{j+1}),$ which equals $s^{(j+1)},$ as desired.
\end{proof}

\begin{proposition}\label{periodic}Let $m \neq n.$ $k >0,$ and $s$ be a finite sequence. Define the sequences $t^{(i)}$ such that $t^{(1)}$ is in $\{m,n\}^k$ and for $1 \le i \le |s|,$ $t^{(i+1)} = C(s_i, t^{(i)}).$ Assume that $t^{(|s|+1)} = t^{(1)}$. Also assume that for all $x$ in $\{m,n\}$ and $t$ in $\{m,n\}^k,$ exactly $2^{-k-1}$ of the elements $i$ in $\{1, \ldots, s\}$ satisfy $s_i = x$ and $t^{(i)} = t.$ 

Then for all sequences $u$ with values in $\{m,n\}$ and length at most $D_i+1,$ the frequency of $u$ in the infinite sequence $E(s s s s \cdots, t^{(1)})$ equals the uniform frequency of $u,$ where $D_i$  is defined in Definition \ref{di}.

\end{proposition}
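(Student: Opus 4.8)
The plan is to show that the window frequencies of $S := E(sss\cdots, t^{(1)})$ are a \emph{universal} functional of the equidistribution hypothesis on $s$, and that this same functional computes the uniform frequency. First I would record that $S$ is genuinely periodic: since $t^{(|s|+1)} = t^{(1)}$, identity $(1)$ gives $E(s_1 s_2 \cdots, t^{(1)}) = E(s_1, t^{(1)}) E(s_2, t^{(2)}) \cdots$, and the output block $E(s_i, t^{(i)})$ recurs with period $|s|$ because both $s_i$ and $t^{(i)}$ are periodic with period $|s|$. Thus $S$ has period $P = \sum_{i=1}^{|s|} |E(s_i, t^{(i)})|$ and its window frequencies are well defined. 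I write $S = B_1 B_2 B_3 \cdots$ with $B_i = E(s_i, t^{(i)})$, extending $s_i, t^{(i)}$ periodically, and I call $(s_i, t^{(i)}) \in \{m,n\} \times \{m,n\}^k$ the \emph{type} of block $B_i$; the hypothesis says each of the $2^{k+1}$ types occurs in a fraction $2^{-k-1}$ of the blocks of one period.

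The key step is a determinacy statement powered by Lemma \ref{first-terms}: every window $u$ with $|u| \le D_k + 1$ (here $k = |t^{(1)}|$; this is the bound written $D_i$ in the statement) that begins at offset $o$ inside a block $B_i$ is determined by the triple $(s_i, t^{(i)}, o)$ alone. Indeed, the portion of $u$ lying inside $B_i$ is read off from $B_i = E(s_i, t^{(i)})$ and $o$, while the portion of $u$ extending past $B_i$ is a prefix of $B_{i+1} B_{i+2} \cdots = E(s_{i+1} s_{i+2} \cdots, t^{(i+1)})$ of length at most $|u| - 1 \le D_k$; by Lemma \ref{first-terms} this prefix depends only on $t^{(i+1)} = C(s_i, t^{(i)})$, which is itself a function of the block type. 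Consequently, for each type $(x,t)$ the number $N_u(x,t)$ of offsets $o \in \{0, \ldots, |E(x,t)| - 1\}$ at which $u$ occurs depends only on $(x,t)$ and $u$, not on the surrounding blocks. This is exactly where the threshold $D_k + 1$ is forced: the future reach of a length-$(D_k+1)$ window is precisely $D_k$, matching the range controlled by Lemma \ref{first-terms}.

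Summing over one period and using the equidistribution hypothesis, the number of occurrences of $u$ whose starting position lies in the first period is $\sum_{(x,t)} 2^{-k-1}|s|\, N_u(x,t)$, while $P = \sum_{(x,t)} 2^{-k-1}|s|\, |E(x,t)|$, so
\[ (\text{frequency of } u \text{ in } S) = \frac{\sum_{(x,t)} N_u(x,t)}{\sum_{(x,t)} |E(x,t)|}, \]
a quantity depending only on $m, n, k, u$ and not on the particular $s$. Finally I would observe that the identical computation applies to $K(m,n)$ itself: assuming Conjecture \ref{uniform}, the block types of $K(m,n) = E(K(m,n), m^k)$ are equidistributed with frequency $2^{-k-1}$, Lemma \ref{first-terms} applies verbatim with the same $N_u(x,t)$, and the only difference is an $o(\cdot)$ boundary correction coming from aperiodicity, which vanishes in the asymptotic density. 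Hence the uniform frequency of $u$ equals the same universal ratio, and therefore equals the frequency of $u$ in $S$.

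I expect the main obstacle to be the careful verification of the boundary-crossing case of the determinacy step — making precise that a window running off the end of $B_i$ reaches no farther than the first $D_k$ terms of the following blocks, so that Lemma \ref{first-terms} genuinely pins it down and $N_u(x,t)$ is well defined — together with confirming that the aperiodic boundary terms for $K(m,n)$ are negligible in the limit, so that the density interpretation of the uniform frequency coincides with the universal ratio.
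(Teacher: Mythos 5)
Your proposal is correct and takes essentially the same route as the paper's own proof: both decompose the periodic sequence into blocks $E(s_i, t^{(i)})$ via identity (1), use Lemma \ref{first-terms} to show that any window of length at most $D_k+1$ starting at a given offset in a block is determined by the block type $(x,t)$ and the offset alone (even when it spills into later blocks), and then use the equidistribution hypothesis on $s$ --- and the GUC for $K(m,n)$ itself --- to identify both frequencies with one universal ratio depending only on $m,n,k,u$. The only difference is presentational: you count occurrences $N_u(x,t)$ combinatorially, whereas the paper phrases the identical computation probabilistically via a uniformly random index and a match probability that becomes deterministic for short $u$.
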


\begin{corollary}
Let $s, t^{(1)}$ satisfy the conditions of Proposition \ref{periodic}. Let $d < D_i - 1.$ Then $tf(m,n,d)$ equals the correlation of terms $d$ apart in $E(s s s \cdots, t^{(1)})$; equivalently, the correlation frequency of terms $d$ apart in the finite sequence $E(s,t)$ with indices taken cyclically.
\end{corollary}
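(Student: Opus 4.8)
The plan is to reduce the Corollary to Proposition \ref{periodic} by writing both $tf(m,n,d)$ and the correlation in the periodic sequence as the \emph{same} sum of window-frequencies, and then matching those frequencies term by term.

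First I would make the periodicity explicit. Set $W := E(ss s\cdots, t^{(1)})$, regarding the infinite input as a concatenation of single terms. Expanding term by term via identity $(1)$ of Shen\cite{Shen} gives $W = E(\sigma_1,\tau^{(1)})E(\sigma_2,\tau^{(2)})\cdots$, where $\sigma = sss\cdots$, $\tau^{(1)} = t^{(1)}$, and $\tau^{(j+1)} = C(\sigma_j,\tau^{(j)})$. A short induction shows that the $\tau^{(j)}$ run through $t^{(1)},\dots,t^{(|s|)}$ cyclically; this is exactly where the hypothesis $t^{(|s|+1)} = t^{(1)}$ is used, since it closes the cycle at the end of each copy of $s$. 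Grouping $|s|$ factors at a time and applying identity $(1)$ to the finite block $s$ yields
\[ W = \big[E(s_1,t^{(1)})E(s_2,t^{(2)})\cdots E(s_{|s|},t^{(|s|)})\big]^{\infty} = \big[E(s,t^{(1)})\big]^{\infty}. \]
Thus $W$ is periodic with the finite block $E(s,t^{(1)})$ as a period of length $P := |E(s,t^{(1)})|$, so its correlation frequency at distance $d$ exists and coincides with the cyclic correlation of that single block. This already establishes the ``equivalently'' clause.

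Next I would pass from correlations to window-frequencies. For any sequence the density of indices $i$ with $(\cdot)_i = (\cdot)_{i+d}$ is the total frequency of those consecutive windows of length $d+1$ whose first and last entries agree. Writing $\mathrm{corr}_d(W)$ for the correlation frequency at distance $d$ in $W$, this gives
\[ \mathrm{corr}_d(W) = \sum_{\substack{u \in \{m,n\}^{d+1} \\ u_1 = u_{d+1}}} \mathrm{freq}_W(u), \]
whereas the definition of $tf(m,n,d)$ as the sum over length-$(d+1)$ sequences with equal first and last term of their uniform frequencies gives the identical sum with $\mathrm{freq}_W(u)$ replaced by the uniform frequency of $u$. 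It then remains to compare the summands. Each window $u$ has length $d+1$, and the hypothesis $d < D_i - 1$ gives $d+1 \le D_i + 1$, so Proposition \ref{periodic} applies to every such $u$ and asserts that $\mathrm{freq}_W(u)$ equals the uniform frequency of $u$. The two sums therefore agree term by term, and $\mathrm{corr}_d(W) = tf(m,n,d)$.

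The main difficulty here is bookkeeping rather than analysis. One must verify carefully that the recursion defining the $\tau^{(j)}$ really reproduces the $t^{(i)}$ in cyclic order, so that the block $E(s,t^{(1)})$ repeats exactly and $P$ is a genuine period, and one must apply the window-counting identity consistently to both the periodic sequence and the theoretical quantity. The generous gap between the hypothesis $d < D_i - 1$ and the range $d+1 \le D_i + 1$ required by Proposition \ref{periodic} leaves room to absorb any off-by-one arising from windows that straddle a period boundary in the cyclic count, so no essential obstacle appears at that step.
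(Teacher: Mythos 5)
Your proposal is correct and takes essentially the same route as the paper: both write $tf(m,n,d)$ and the correlation in the periodic sequence as the sum, over windows $u \in \{m,n\}^{d+1}$ with equal first and last entries, of the corresponding frequencies, and then match the summands term by term using Proposition \ref{periodic}, which applies since $d+1 < D_i$. The explicit periodicity argument you give for $E(sss\cdots, t^{(1)}) = \bigl[E(s,t^{(1)})\bigr]^{\infty}$ is not a departure either; it is the same expansion via identity $(1)$ and the closing condition $t^{(|s|+1)} = t^{(1)}$ that the paper carries out inside its proof of Proposition \ref{periodic}, so your write-up just relocates that detail into the corollary.
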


\begin{proof}
For all $d,$ $tf(m,n,d)$ equals the sum over all $u$ in $\{m,n\}^{d+1}$ with equal first and last terms of the uniform frequency of $u.$ Let $d < D_i - 1.$ By Proposition \ref{periodic}, the 
uniform frequency of $u$ equals the frequency of $u$ in $E(s s s \cdots, t^{(1)}).$ Therefore, $tf(m,n,d)$ equals the correlation frequency of terms $d$ apart in $E(s s s \cdots, t^{(1)}).$
\end{proof}

\begin{proof}[Proof of Proposition \ref{periodic}] 
WLOG, $n > m.$ This is because $K(n,m)$ and $K(m,n)$ have the same uniform frequencies. By identity $(1)$ in Shen\cite{Shen}, we can expand $E(s, t^{(1)})$ as
\[ E(s_1, t^{(1)}) E(s_2, t^{(2)}) \cdots E_(s_{|s|}, t^{(|s|)}).\]
By assumption, $C(s_{|s|}, t^{(|s|)}) = t^{(1)}.$ Therefore, we can expand $E(s s s \cdots t^{(1)})$ as an infinite repetition of the above sequence.

Let $u$ be a finite sequence. Assuming the GUC, the uniform frequency of $u$ can be thought of as follows. First, we choose a ``uniformly random index'' of $K(m,n),$ described in the next paragraph. Then, we compare $u$ to $K(m,n)$ for the next $|u|$ terms. The uniform frequency of $u$ equals the probability that all $|u|$ elements agree. 

 Consider all $2^{k+1}$ terms of the form $E(x, v),$ where $x \in \{m, n\}, v \in \{m, n\}^k.$ These sequences have a total length $N_k$. (By induction, one can show $N_k = 2 \cdot (m+n)^{k+1}.$ ) We can now choose one random term of one of the sequences such that all terms in the $2^{k+1}$ sequences have a $N_k^{-1}$ probability of being chosen. Intuitively, this random process matches the process of choosing a random index according to the notion of asymptotic density. We omit the details. At this point, we have that the uniform frequency of $u$ equals
 \[ \sum_{x \in \{m, n\}, t \in \{m, n\}^k, j \in \{1, \ldots, |E(x,t)|\}} N_k^{-1} \text{Pr}[\text{match given (x,t,j)}].\]
 
  Suppose that we have chosen term $j$ of $E(x,v).$ This $E(x,v)$ represents a subsequence of $K(m,n).$ We want to compare the next $|u|$ terms of $K(m,n)$ to $u.$ If $j$ is an index that is not one of the last $|u|-1$ indices of $E(x,v),$ then the proposition that the next $|u|$ terms of $K(m,n)$ agrees with $u$ only depends on $E(x,v)$ and $j.$ 

However, $j$ might be one of the last $|u|-1$ indices of $E(x,v).$ In this case, the proposition depends on the terms of $K(m,n)$ after $E(x,v)$; up to $|u|-1$ terms to be precise. The rest of $K(m,n)$ after $E(x,v)$ looks like $E(y, C(x, v)),$ where $y$ is an infinite sequence with values in $\{m, n\}.$ We also have the identity
 \[E(x,v) E(y, C(x,v)) = E(xy, v).\]
A priori, it is unknown if $u$ agrees with $E(xy, v)$ (starting with term $j$ of the latter). The key observation is that $E(y, C(x,v))$ has a certain number of terms which are independent of $y.$ 

For example, if $m = 1, n = 2, k = 2, C(x,v) = 22,$ then we claim that $E(y, 22)$ has a certain number of terms which are independent of $y.$ Indeed, $E(y,2)$ must begin $2, \ldots,$ and $E((2, \ldots), 2)$ must begin $2, 2, 1.$ Also suppose that $|u| = 4$ and $j$ is the last index of $E(x,v).$ We want to know if the next $4$ terms of $K(m,n)$ agree with $u.$ We have just shown that the three terms after $E(x,v)$ are always $2, 2, 1.$ Therefore, we know this fact for sure. On the other hand, if $|u| = 5,$ then the ``next $5$ terms of $K(m,n)$'' would involve the first four terms of $E(y, 22).$ The first four terms could be either $(2,2,1,2)$ or $(2,2,1,1);$ indeed, these two cases both have a probability in $(0,1)$ of happening. 

By Lemma \ref{first-terms}, the first $D_i$ terms of $E(y, C(x,v))$ are independent of $y.$ The sequence $u$ has length at most $D_i + 1.$ Therefore, the proposition that the first $|u|$ terms of $K(m,n)$ agrees with $u$ only depends on $x, v, $ and $j.$  In other words, we have replaced the expression $ \text{Pr}[\text{match given (x,t,j)}]$ with a deterministic function of $(x,t,j)$ to $\{0,1\}.$ 

Importantly, this statement is false for long sequences $u.$ For long sequences $u,$ the event of being a match given $x, v, j$ depends also on the terms of $y,$ so the probability is not in $\{0,1\}.$



On the other hand, we can repeat this analysis for finding the frequency of $u$ in $E(s s s s \cdots, t^{(1)}).$ It is now more straightforward to choose a uniformly random index. We choose term $j$ of $E(s_i, t^{(i)})$ with probability $|E(s, t^{(1)})|^{-1}.$ 


 Thus the frequency of $u$ in $E(s s s s \cdots, t^{(1)})$ equals
\[ \sum_{1 \le i \le |s|, 1 \le j \le |E(s_i, t^{(i)})|} |E(s, t^{(1)})|^{-1} \text{Pr}[\text{match }\ldots].\]
By assumption, an arbitrary pair $(x,t)$ equals $(s_i, t^{(i)})$ exactly $2^{-k-1}$ of the time. Therefore, the frequency of $u$ equals
 \[ \sum_{x \in \{m, n\}, t \in \{m, n\}^k, j \in \{1, \ldots, |E(x,t)|\}} N_k^{-1} \text{Pr}[\text{match } \ldots].\]
 A priori, the probability of a match has horrible dependencies on the order of the pairs $(s_i, t^{(i)})$ because we can no longer treat them as random in any sense. Fortunately, for $u$ which are short enough, the event of being a match only depends on $x, t, j.$ Therefore, we arrive at the same summation as before, and the frequency of $u$ in $E(s s s s \cdots, t^{(1)})$ equals its uniform frequency.
\end{proof}
%
%
%

\subsection{Interpretation as the autocorrelation function of a stochastic process}
For $m, n>0$ with $m+n$ odd, we construct a stochastic process $P_{m, n}$ as follows. This stochastic process is an infinite collection of random variables $\{X_t\}_{t \in \mathbb{Z}}$ with values in $\{m, n\}$. The author is not familiar with probability distributions on infinite-dimensional objects, but marginal distributions of any finite collection are easier to grasp. We define $P_{m, n}$ by explaining how to draw a joint sample out of any finite consecutive subsequence or ``window''. Each process is ``stationary,'' meaning that for all $d>0, k, k' \in \mathbb{Z}$ the marginal joint distribution of $(X_k,X_{k+1}\ldots, X_{k+d})$ is exactly equal to the marginal joint distribution of $(X_{k'}, X_{k' + 1}, \ldots, X_{k' + d}).$ To draw a joint sample of $d+1$ consecutive $X_k,$ we output the sequence $(a_0, \ldots, a_d) \in \{m, n\}^{d+1}$ with probability equal to the uniform frequency of $(a_0, \ldots, a_d)$ in $K(m, n)$ assuming the GUC. For convenience, we define $Q_{m, n}$ to be the same process, except we replace output values of $X_k$ which are $n$ by $+1$ and $m$ by $-1.$

The autocorrelation function of a stationary stochastic process indexed by the integers is defined to be a function from $d \in \mathbb{Z}_+$ that gives the correlation of two elements which are $d$ apart. This is a map from $\mathbb{Z}_+ \rightarrow [-1, 1].$ In out specific example, the autorrelation of $P_{m, n}$ at distance $d$ is exactly $tf(m, n, d).$ Thus a plot of $tf(m, n, d)$ versus $d$ is a plot of the autocorrelation function.

\textbf{Examples.} If the stochastic process is periodic with period $p$ (meaning any finite sample is periodic with period $p$), then the autocorrelation function will also be periodic with period $d.$ If 
the stochastic process has fully independent $X_i$ (at least on all finite windows), then all autocorrelations are zero. If the stochastic process is a Markov process that stabilizes, then the autocorrelation function tends to exponentially decrease to zero, but not necessarily smoothly.

The plots in the last section show that the autocorrelation function is quite remarkable for the three pairs $(m, n) \in \{(1,2), (2,3), (3,4)\}.$ The author is not aware of the significance of such a remarkable-looking autocorrelation function, but it seems very unusual.

\section{Algorithmicaly computing and estimating $tf(m, n, d)$}
All algorithms were implemented in $C++.$

We computed the exact uniform frequencies for $K(1, 2)$ only. 
Our main method is to find a periodic sequence according to Proposition \ref{periodic} and efficiently compute correlations in this periodic sequence. In particular, we used $k = 18$ with a period of length $2 \cdot 3^{18}.$ Theoretically, we only have a guarantee that for $d$ less than about $1400,$ $tf(m,n,d)$ will equal the correlation within the periodic sequence, and for much larger $d,$ we will only have an approximation. We mitigated this by running two trials for $k = 18.$ 

For $k = 18,$ it is easy to see that we want a finite directed walk on $G_{m, n, k}$ which may repeat edges but must be closed and use all edges the same number of times. The most straightforward way to do this is to use an Eulerian cycle.  In Theorem \ref{connected}, we proved that the graph must be connected. Also, Proposition 3.1 of Shen \cite{Shen} implies that all vertices have in- and out-degree equal to $2.$ Therefore, an Eulerian cycle must exist. The time to find a cycle is dominated by the rest of the algorithm. We incorporated many random choices into our algorithm to see if the resulting estimations for $1400 \le d \le 10^5$ changed much. Suppose that this cycle is represented by the sequence $s$ and the starting point $t.$ We next expanded $E(s, t)$ into a \texttt{vector<unsigned int>}, which is straightforward. Note that the length is $2 \cdot 3^{18}.$ Let $N:=2 \cdot 3^{18}.$

Now, we want to compute correlations in $E(s, t)$ for all distances, or at least for $d \le 100000.$ With a naive implementation, we loop, for all $d,$ through the entire sequence $E(s, t).$ The total number of comparisons is $10^5 \cdot N\approx 7.7 \cdot 10^{13},$ which is too many.\footnote{The author admits to having quite limited computational resources} Instead, we use discrete fourier transforms to compute the coefficients. We follow the conventions in Sutherland\cite{Suth}, section 3.4.3. Suppose that $f, g$ are polynomials defined by
\begin{align}\label{fg}f(x) = \sum_{i=1}^{N} (2 E(s, t)_i - 3) x^{(i-1)}, g(x) = (2 E(s, t)_1 - 3) +  \sum_{i=1}^{N - 1} (2 E(s, t)_{N + 1 - i} - 3) x^{i}.\end{align}
For example, if $k = 1,$ then one choice for $E(s,t)$ is $1,2,1,1,2,2.$ Then 
\[ f(x) = -1 + x - x^2 - x^3 + x^4 + x^5, g(x) = -1 + x + x^2 - x^3 - x^4 + x^5.\]

The coefficients for $g(x)$ are those of $f,$ reversed, and shifted so that they have the constant coefficients are both $(2 E(s, t)_1 - 3).$
Now consider $f(x) g(x) \pmod{x^N-1}$ i.e. with no terms of degree more than $N-1.$ One can prove that the coefficient of $x^d$ in $f(x) g(x) \pmod{x^N - 1}$ equals the number of times $E(s,t)_i = E(s, t)_{(i+d) \pmod{N}}$ minus the number of times $E(s,t)_i \neq E(s, t)_{(i+d) \pmod{N}}$. It is then straightforward to compute the correlations in $E(s, t)_i$ from these.
Let $\omega$ be a primitive $N^{\text{th}}$ root of unity such that differences between powers are invertible. Recall that in the polynomial representation for discrete fourier transforms, the discrete fourier transform of a polynomial $f$ with degree less than $N$ with respect to a primitive $N^{\text{th}}$ root of unity, $\omega$, such that differences between powers are invertible is the data $f(1), f(\omega), \ldots, f(\omega^{N-1}).$

Then applying classical results of discrete fourier transforms, we find that
\[\text{DFT}_{\omega}(fg \pmod{x^N - 1})  = \text{DFT}_{\omega}(f) \cdot \text{DFT}_{\omega}(g),\]
where $\cdot$ denotes elementwise multiplication. Also, we have the identity 
\[ \text{DFT}_{\omega^{-1}} \text{DFT}_{\omega} = N \cdot id.\]
Furthermore, these results hold in any ring with the stated conditions. We used the finite field of (prime) order equal to $5N+1,$ and we use the primitive $N^{\text{th}}$ root of unity $32.$ We applied an FFT type algorithm to compute the discrete fourier transforms quickly, outlined in the following lemma. Theoretically, its runtime is around $O(N \log N)$ field operations (with no inverses required).
\begin{lemma}[FFT of vectors whose length is a multiple of 3]  Let $f$ be a polynomial of degree less than $3M.$ Let $\omega$ be a primite $3M^{\text{th}}$ root of unity such that differences between powers are invertible. Suppose that $g, h, i$ are polynomials of degree less than $M$ such that 
\[f(x) = g(x) + x^M h(x) + x^{2M} i(x).\]
 Let 
\begin{align*} f_1(x)&:= g(x) + h(x) + i(x)\\ f_2(x) &:= g(x \omega) + \omega^M h(x \omega) \omega^2M i(x \omega) \\ f_3(x) &:= g(x \omega^2) + \omega^{2M} h(x \omega^2) + \omega^{4M} i(x \omega^2). \end{align*}
Suppose that the discrete fourier transforms of $f_1, f_2, f_3$ are computed with respect to $\omega^3.$ Then
\begin{align*} f(\omega^{3j}) &= f_1(\omega^{3j}) \\ f(\omega^{3j+1}) &= f_2(\omega^{3j}) \\ f(\omega^{3j+2}) &= f_3(\omega^{3j}). \end{align*}
\end{lemma}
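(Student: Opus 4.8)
The plan is to prove the three evaluation identities by direct substitution, treating the lemma as the standard radix-$3$ Cooley--Tukey step: a length-$3M$ DFT decomposed into three length-$M$ DFTs. The only structural fact I need at the outset is that $\omega^3$ is a primitive $M$th root of unity (since $\omega$ is a primitive $3M$th root), and that the hypothesis ``differences between powers are invertible'' descends to $\omega^3$, because each difference $\omega^{3a}-\omega^{3b}$ is among the differences of powers of $\omega$. This guarantees the three DFTs of $f_1, f_2, f_3$ with respect to $\omega^3$ are well-defined, and since $g, h, i$ (hence $f_1, f_2, f_3$) have degree less than $M$, each such DFT produces exactly the values $f_r(\omega^{3j})$ for $0 \le j < M$.

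Then I would fix $j$ and treat the three residues separately. The central observation is $\omega^{3M}=1$, so $(\omega^{3j})^M = \omega^{3Mj} = 1$; consequently $(\omega^{3j+r})^M = \omega^{rM}$ and $(\omega^{3j+r})^{2M} = \omega^{2rM}$ for $r \in \{0,1,2\}$. Plugging $x = \omega^{3j+r}$ into the defining relation $f(x) = g(x) + x^M h(x) + x^{2M} i(x)$ thus replaces the monomials $x^M, x^{2M}$ by the constants $\omega^{rM}, \omega^{2rM}$. For $r=0$ these are both $1$, giving $f(\omega^{3j}) = g(\omega^{3j}) + h(\omega^{3j}) + i(\omega^{3j}) = f_1(\omega^{3j})$ immediately.

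For $r=1$ and $r=2$ I would additionally use the factorization $\omega^{3j+r} = \omega^r \cdot \omega^{3j}$, so that every appearance of the argument becomes an evaluation of $g, h, i$ at $\omega^{3j}$ with argument scaled by $\omega^r$. Comparing with the definitions $f_2(x) = g(x\omega) + \omega^M h(x\omega) + \omega^{2M} i(x\omega)$ and $f_3(x) = g(x\omega^2) + \omega^{2M} h(x\omega^2) + \omega^{4M} i(x\omega^2)$, evaluated at $x = \omega^{3j}$, shows these agree term by term with the reduced expression for $f(\omega^{3j+r})$: the twiddle factors $\omega^{rM}$ and $\omega^{2rM}$ built into the coefficients of $h$ and $i$ in $f_2, f_3$ are exactly the constants produced by the reduction of $x^M$ and $x^{2M}$. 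This completes all three identities.

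Since the argument is a finite computation, there is no genuine analytic obstacle; the only thing to be careful about is the bookkeeping of the twiddle factors, that is, checking that the exponents of $\omega$ multiplying $h$ and $i$ in the definitions of $f_2$ and $f_3$ match $\omega^{rM}$ and $\omega^{2rM}$ respectively. I would also flag the evident typo in the display for $f_2$, whose middle and last terms should read $\omega^{M} h(x\omega) + \omega^{2M} i(x\omega)$ (restoring the missing ``$+$'' and typesetting the exponent as $\omega^{2M}$). Once the indices are aligned, the three identities fall out and the lemma follows.
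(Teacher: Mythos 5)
Your proof is correct and takes exactly the approach the paper intends: the paper's own proof is simply the remark ``this is just straightforward substitution,'' and your argument carries out that substitution in full, with the correct twiddle-factor bookkeeping and a proper correction of the typo in the display for $f_2$.
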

\begin{proof} This is just straightforward substitution. \end{proof}

This lemma also outlines a recursive method to compute the discrete fourier transforms of a length $N$ sequence with respect to $32$ in the field $\mathbb{F}_{5N+1}.$ Fortuitously, $5N+1 = 3874204891 < 2^{32},$ so field elements are stored as \texttt{unsigned int}s. We must often perform arithmetic $\text{mod } 5N+1.$ Of course, multiplying unsigned integer and modding by $5N+1$ does not work. Instead, we casted all arguments to \texttt{unsigned long long}s, or $64$-bit positive integers. Modding by $5N+1$ was done judiciously to avoid overflow in 64-bit integers because $5N+1$ is so close to $2^{32}.$ Note that modular arithmetic calls \texttt{unsigned long long}s, but the data is converted to \texttt{vector<unsigned int>} types to save memory. The FFT algorithm does not require inverses, and we only need $32^{-1}$ and $N^{-1}$ to invert $DFT_{32},$ so we precomputed these in SageMath. 

We also used the field $\mathbb{F}_{5N+1}$ for smaller $k$ because we still have roots of unity of order $2 \cdot 3^j$ for $j \le 18,$ also precomputed in SageMath.


We also used DFT mod $5N+1$ for different values of $(m,n).$ By Proposition \ref{periodic}, for different values of $(m,n),$ there are different periodic approximations depending on $k$ in which one period has the form $E_{m, n}(s, t).$ One can show that the period is exactly $2(m+n)^k.$ Thus we cannot compute correlations using DFT in $\mathbb{F}_{5N+1}$ directly. Instead, we choose a value of $k$ so that the period is somewhat less than $N.$ For example, with $(m,n) = (2,3),$ we have $M:= 2(2+3)^{12} = 488281250 < N.$ We pad $E_{2,3}(s,t)$ with zeros to make a length $N$ vector or degree $N-1$ polynomial $f,$ with reverse $g$. We replace $3$ in equation (\ref{fg}) above with $5$ so that the coefficients of $f$ and $g$ are $\pm 1.$ We denote the terms of $f$ and $g$, respectively,  by
\[f = a_0, a_1, \ldots, a_{M-1}, 0_M, 0_{M+1}, \ldots, 0_{N-1}.\]
\[g = a_0, 0_{N-1}, 0_{N-2}, \ldots, 0_{M}, a_{M_1}, \ldots, a_1.\]
Here, $0_j$ just means zero, and the indices are for bookkeeping. Note that $g$ reversed is $f.$ For $d < N - M,$ The coefficient of $x^d$ $f(x) g(x) \pmod{x^N-1}$ equals 
\[ \sum_{i=0}^{M-1-d} a_i a_{i+d}.\]
We want
\[ \sum_{i=0}^{M-1} a_i a_{Mod(i+d, M)},\]
so we are missing $\sum_{i=M-d}^{M-1} a_i a_{i+d-M}.$ To compute these sums for $d$ up to a limit $D,$ we can define a third polynomial $h(x)$ by
\[ h = a_0, a_1, \ldots, a_{D-1}, 0_D, \ldots, 0_{N-1}.\]
If $D < N,$ then the values $\sum_{i = M-d}^{M-1} a_i a_{i+d-M}$ for $d < D$ are read off the coefficients of $h(x) h(x) \pmod{x^N-1}$ in a straightforward way.

To be concrete, for $m+n = 5,$ we choose $k = 12, 2\times 5^{12} = 488281250, D = 10^6.$ For $m+n=7,$ we choose $k = 10, 2\times 7^{10} = 564950498, D = 10^6.$ In this way, we can estimate $tf(m, n, d)$ for $d < D.$ Again, We can run multiple trials to see how much estimates of $tf(m,n,d)$ differ as a function of $d$ to estimate the point up to which this algorithm gives near exact values.

\begin{remark} The finite field of prime order $3 \cdot 2^{30} + 1$ is also convenient using \texttt{unsigned int}s. \end{remark}

\begin{remark} If one attempts to choose a higher $k,$ then the main roadblock for the algorithm outlined here is memory, not time. We estimate that our program uses about 7 GB and 20 minutes for $(m,n) = 1, k = 18.$  By $k = 20,$ it the polynomial vectors would have size $2 \times 3^{20} \approx 7 \times 10^9,$ and to multiply these  with our FFT algorithm would need 64-bit integers. On the other hand, modular arithmetic with 64-bit integers and without specialized algorithms requires at least 96-bit or 128-bit integers, which could cause an unanticipated slowdown.  \end{remark}

\section{Plots and tables of computed values} 
The plots were prepared with  mathematica. In all plots, the $x$-coordinate is proportional to $d,$ with different ratios in different plots. The $x$-coordinates can be ignored. Except in the first two plots, some data points for small $d$ are not within the $y$-range. We consider the patterns for larger $d$ to be more interesting.

\subsection{Comments on $K(1,2)$}
 The top plot of Figure \ref{fig:tf12d} is a plot of untransformed values of $tf(1,2,d)$. It shows many mysterious patterns, the most obvious of which is that the sign is determined by the remainder $\text{mod }3$ for small $d.$ The value $d = 782$ is the smallest $d$ which breaks this pattern; we hhave $tf(1,2,782) = \frac{2392527}{3^{14}}  \approx 0.500218 > \frac{1}{2}.$

As previously mentioned, our estimated values for $tf(1,2,d)$ are not expected to be exact for $d \ge 1400.$ 
Figure \ref{fig:tf12d-est} plots $tf(1,2,d)$ for $d \le 30000$ with an arbitrary scaling factor. We differentiate the data points $d, tf(1,2,d)$ based on $Mod(d,3)$ because this plot, we well as many smaller plots, suggest that the series $tf(1,2,d)$ behaves much differently for $d$ in the three residue classes $\text{mod }$ 3, even for large $d.$ Figure \ref{fig:tf12d-est1} shows only data points with $d \equiv 0 \pmod{3}.$ The series resembles a random walk which reverts to the mean more often than usual. The ``time'' scale of reverting to the mean does not appear to change much as $d$ increases, although this could be an artifact of our estimation method. The true values of $tf(1,2,d)$ may show much different patterns.

Figure \ref{fig:tf12d-est2} compares values of $tf(1,2,d)$ for two different random trials.  (Recall that two trials are different because we our Eulerian Cycle algorithm uses randomness.) They agree exactly for $d \le 5100.$ This suggests that our estimated values are mostly accurate for $d \le 5100.$ We see there are significant differences by $d = 10000,$ so our values for $d \ge 10000$ should be considered very inaccurate. However, we believe that the features of non-stationarity in our plots of estimated $tf(1,2,d)$ reflect the true values of $tf(1,2,d).$

\subsection{Comments on $K(2,3)$}
Figure \ref{fig:tf23d-est} shows $tf(2,3,d),$ and points in different residue classes $\text{mod }5$ are differentiated. The ``blue'' and ``green'' classes are very similar, as are the ``red'' and ``purple'' classes. They are not exacty the same, as can be verified by checking a list of small values up to $d = 30.$ In a second trial, we found that the values were exactly the same up to $d \approx 1.2 \times 10^5,$ which suggests that the estimates are near exact up to this point. In Figure \ref{fig:tf23d-est2}, we isolate just the class $d \equiv 0 \pmod{5}.$ The class $0 \pmod{5}$ is the disjoint union of $5$ classes $\pmod{25}.$ The bottom plot suggests that these 5 classes behave slightly differently. If the 5 classes behaved randomly within the series for $d \equiv 0 \pmod{5},$ then we would not expect to see one color concentrate on one ``side'' of the series locally, but this is what we see. We did not observe a similar pattern for $tf(1,2,d)$ with $d$ restricted to, for example, $0, 3, 6 \pmod{9},$ possibly because we were unable to compute exact values above $d = 5000.$

There appear to be many values of $d$ which are ``nodes'' of all five series. We think this is suggestive of a plot of the real and imaginary parts of a complex function. Specifically, we hypothesize that there exists a mostly smooth function $f: \mathbb{Z}_+ \rightarrow \mathbb{C}$ such that 
\[tf(2, 3, d) \approx Re(f(d) \cdot cis(2 \pi i d / 5).\]
A more parsimonious hypothesis is that the function $f$ actually oscillates in one real subspace of $\mathbb{C},$ which would imply that the five sub-series of $tf(2,3,d)$ $\text{mod}$ $5$ would have approximately constant ratios. Figure \ref{tf12d-est} suggests that series do not have constant ratios. Therefore, we think it is worthwhile to consider a general complex function $f.$

To test this hypothesis, we can compute the series
\[h(d) := tf(2,3,d) - 2 \cos(2 \pi/5) tf(2,3,d+1) + tf(2,3,d+2).\] If $f$ changes relatively slowly and the above identity has relatively small errors, then the series $h$ will have relatively small magnitude. The series $h(d)$ for $d \le 100000$ is shown in Figure \ref{fig:tf23d-res1}. Indeed, the series $h(d)$ has amplitude approximately one-tenth that of $tf(2,3,d).$ Having established this, we may want to know if the errors look like Gaussian noise or themselves have structure. Restricting the series to one class $\text{mod}$ $5$ shows no clear pattern. However, restricting to one class $\text{mod}$ $25$ shows obvious wave-like patterns, as shown in Figure \ref{fig:tf23d-res2}, although the ratio of the noise amplitide to the wave amplitude is much larger in this figure than in the plots of the untransformed series $tf(2,3,d).$ 

Because the series $h$ appears to behave differently in the different residue classes $\text{mod}$ $25$ and because the noise ratio is higher, we plot exponential moving averages of the 25 series  
\[g_j(c):= h(25 c + j)\] for $j = 0, \ldots, 24.$ By definition, the exponential moving average with decay parameter $p \in [0,1]$  of a discrete time series $j(t)$ is another time series $m(t)$ such that for all $t,$ $m(t)$ is a mean of $j(t), j(t-1), j(t-2), \ldots$ with relative weights $1, p, p^2, \ldots$. We use $p = 0.99.$ The exponential moving averages of the 25 series $g_j$ are shown in Figure \ref{fig:tf23d-res3}. We see a convincing pattern of wave-like series with common nodes, although the pattern breaks down by $c = 3000.$ This corresponds to $d = 75000.$  We think that these wave-like patterns are independent of and possibly orthogonal to those in Figure \ref{fig:tf23d-est}. This is because the wave-like forms in Figure \ref{fig:tf23d-est} show big oscillations that don't cross the x-axis. Again, the series appear to have predictable phase. Using rather ad-hoc estimation methods\footnote{For example, we might take a cross section at $c \approx 1000.$}, we predict that 
\[ h(d) \approx Re(f_1(d) \cdot cis(2 \cdot 3 \pi i / 25))\]
for a relatively smooth function $f_1: \mathbb{Z}_+ \rightarrow \mathbb{C}.$ In particular, our predicted phase multiplier  or ``angular momentum'' is $6 \pi i / 25,$ whereas our predicted phase for the plain function $tf(2,3,d)$ was $2 \pi i / 5.$

We expect that the mod $5$ and mod $25$ patterns for $(m,n)=(2,3)$ have analogs for $(m,n)=(1,2)$ and $(3,4).$ However, computing $tf(1,2,d)$ requires much more memory than computing $tf(2,3,d),$ and we think the patterns are most suggestive in the $tf(2,3,d)$ data.

\subsection{Comments on $K(3,4)$}
Figure \ref{fig:tf34d-est} shows $tf(3,4,d),$ and points in different residue classes $\text{mod 7}$ are differentiated. A different random trial suggests that these estimates from the $k=10$ estimation are accurate up to $500000,$ although we don't show the whole range because the pattern would be too compressed. We see that the seven classes contain three pairs that are quite similar. We see that several values of $d$ appear to be simultaneous nodes of the seven classes. Again, we hypothesize that there exists a mostly smooth function $f: \mathbb{Z} \rightarrow \mathbb{C}$ such that
\[ tf(3,4,d) \approx Re( f(d) \cdot cis(2 \pi i d / 7)).\]

\subsection{Comments on $K(1,4),  K(1,6),$ and $K(2,5)$}
Figures  \ref{fig:tf14d-est},  \ref{fig:tf16d-est}, and  \ref{fig:tf25d-est} show plots of $tf(1,4,d),$ $tf(1,6,d),$ and $tf(2,5,d)$, respectively. We expect that our computed values are very accurate for about $d \le 10^5, 10^{5.5}, 10^{5.5},$ respectively.
At first glance, all three series resemble ``stochastic volatility'' processes, of stochastic processes of the form 
\[tf(m,n,d) = N(d) V(d),\] where $N(d)$ are independent normal random variables, and $V(d)$ is a relatively smooth function of $d.$ 
However, by meticulously filtering out by various moduli and zooming in, we find a few patterns. 

The first plot in Figure \ref{fig:tf14d-est2} shows the series $tf(1,4,d)$ restricted to $d \equiv 0 \pmod{5}$ and $0 \le d \le 15000.$ There is a clear wave-like pattern in the right half of the plot, but with significantly more noise than the comparable $tf(2,3,d)$ plots; (and, as seen before, this noise doesn't seem to vanish for larger $d$). This motivates us to take exponential moving averages. The second plot in Figure \ref{fig:tf14d-est2} shows the exponential moving average with parameter $p = 0.95,$ or a ``time constant'' of about 20. This constant was chosen informally to reflect the fact that the waves for $tf(2,3,d)$ appear to have shorter wavelength than those in 
\[h(d) := tf(2,3,d) - 2 \cos(2 \pi/5) tf(2,3,d+1) + tf(2,3,d+2).\]

The third plot shows all five classes together. The five classes resemble waves that do not have common nodes. Instead, the wavelengths seem to be comparable, but the phases seem to be asynchronized, possibly approaching even spacing around the phase space. Also notable is that the different classes do not obviously have the same shape, as can be seen by comparing the second and fourth plots of Figure \ref{fig:tf14d-est2}. These wave patterns extend at least until $d = 100000,$ as long as we take exponential moving averages. We did not find an approximate linear relation between the five series. Even the sum of the five series shows clear wave-like patterns.

The series $tf(1,6,d)$ does not show clear modular structure until we take $\text{mod}$ $21.$ Recall that in all previous instances, the modulus was $m+n,$ but now, it is $3(m+n).$
The first plot in Figure \ref{fig:tf16d-est2} shows the series $tf(1,6,d)$ restricted to $d \equiv 0 \pmod{7}, d \le 420000,$ and the three collors distinguish the three sub-classes $\text{mod}$ $21.$ It is clear that these three sub-classes are different and that $7$ is not a natural modulus for splitting the series $tf(1,6,d)$. One can check that $3$ is not either. The second plot shows $tf(1,6,d)$ for $210000 \le d \le 567000$ and $Mod(d, 21) \in \{0, 4, 6, 8, 12\}$ (which were arbitrarily chosen), after applying an exponential moving average with $p = 0.99$ for each class separately. The value $p=0.99$ is probably too low. In any case, we do not see a clear pattern.

When analyzing $tf(2,5,d),$ we are really scavenging for patterns. Restricting to one class $\text{mod}$ $7$ yields seven series whose moving exponential averages show wave-like patterns, but without the averages, there appears to be too much noise to see a wave-like pattern. Thus, a natural hypothesis is that restricting $tf(2,5,d)$ to one class modulo a multiple of $7$ may split the series into series with less noise. After restricting to $d \le 100000$ and $d \equiv 0 \pmod{7 N}$ for $N \in \{1, \ldots, 25\},$ we did not find a series that showed a wave-like pattern before taking exponential means. In Figure \ref{fig:tf25d-est2}, the first plot shows $tf(2,5,d)$ restricted to $35000 \le d \le 70000.$ The second plot shows the exponential moving average of the series in the first plot with $p = 0.95.$ The third plot shows a series like the second plot, but  with $d \equiv 3 \pmod{7}.$ Note that the amplitude in the second and third plots are about one-quarter  of the amplitude in the first plot, meaning that the first plot has a lot of noise.

\subsection{Conclusion}
Of all of the patterns that we have noted, we think two are most important. First, when $n = m+1,$ the series $td(m,n,d)$ behaves much differently as we restrict $d$ to  one residue class $\text{mod }m+n.$ Second, for $(m,n) = (2,3)$ and $(3,4)$ and less so for $(1,2),$ the $(m+n)$ different series resemble waves that have simultaneous nodes. More speculatively, the series $tf(2,3,d)$ shows convincing patterns mod $25.$ This suggests that the series $tf(2,3,d)$ could be built from the combination of structure $\text{mod}$ $5,$ substructure $\text{mod}$ $25,$ and so on. It's possible that these patterns will completely disintegrate beyond the range that we have estimated here. To know for sure, one would need a lot of memory to extend our calculations or a more memory-efficient approach. If these patterns are transitory, it may still be interesting to ask why they are so convincing for the range that we have estimated here.

\begin{table}[h]
	\centering
	\caption{A table of correlation frequencies for the sequence $K(1,2)$, assuming the GUC.}
	\begin{tabular}{cl}
		\hline
		d & tf(1, 2, d)\\
		\hline
		1 &  2/3 \\
		2 &  2/3 \\
		3 & 2/9 \\
		4 & 2/3 \\
		5 &  2/3 \\
		6 &  8/27 \\
		7 &16/27 \\
		8 &16/27 \\
		9 &2/9 \\
		10 & 50/81 \\
		11 &  50/81 \\
		12 & 20/81 \\
		13 &2/3 \\
		14 & 2/3 \\
		15 & 22/81 \\
		16 & 146/243 \\
		\hline
	\end{tabular}
	\label{tab:diffs}
\end{table}

\begin{figure}[htbp]
	\centering
    \includegraphics[width=\textwidth]{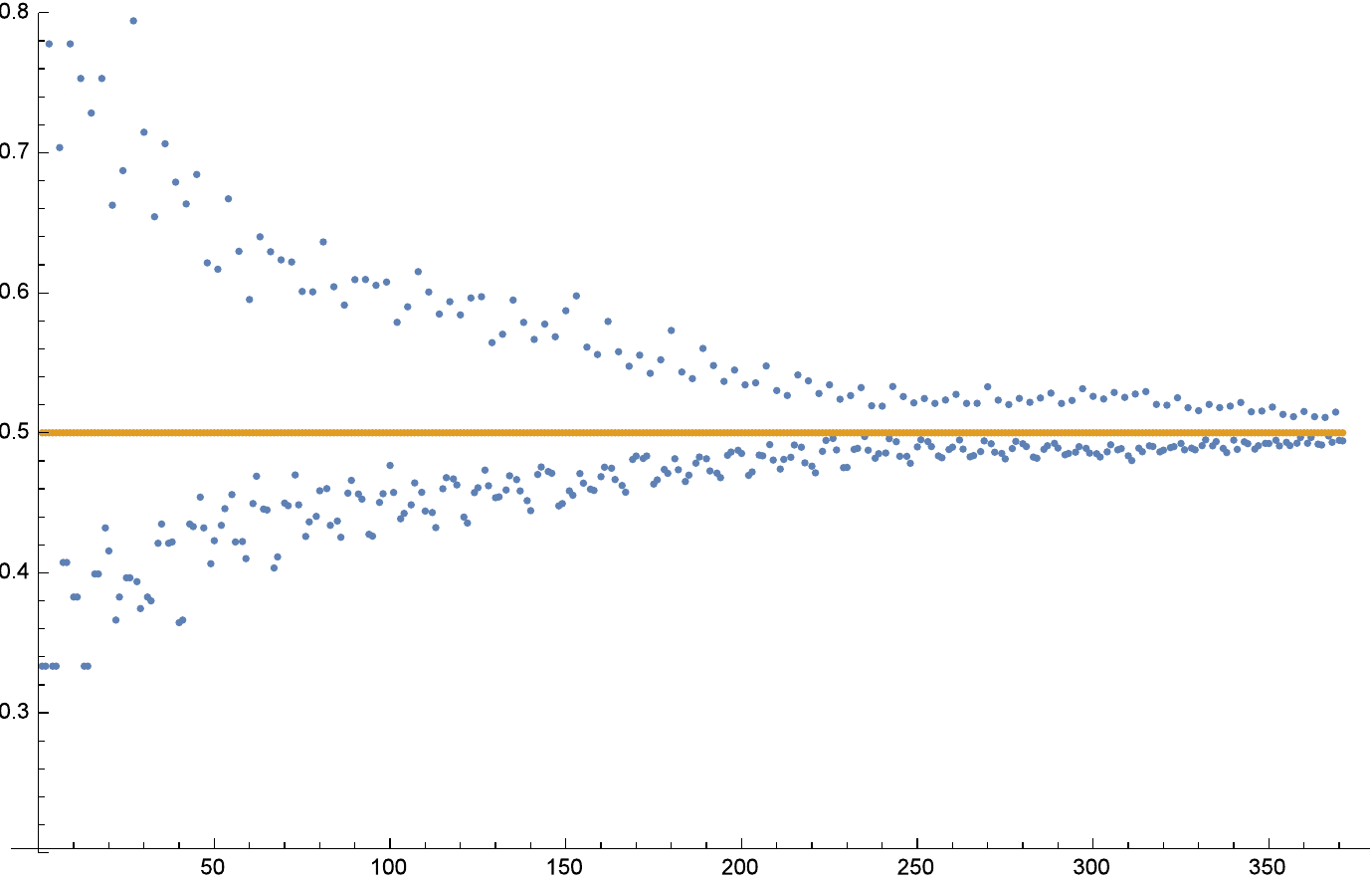}
       \includegraphics[width=\textwidth]{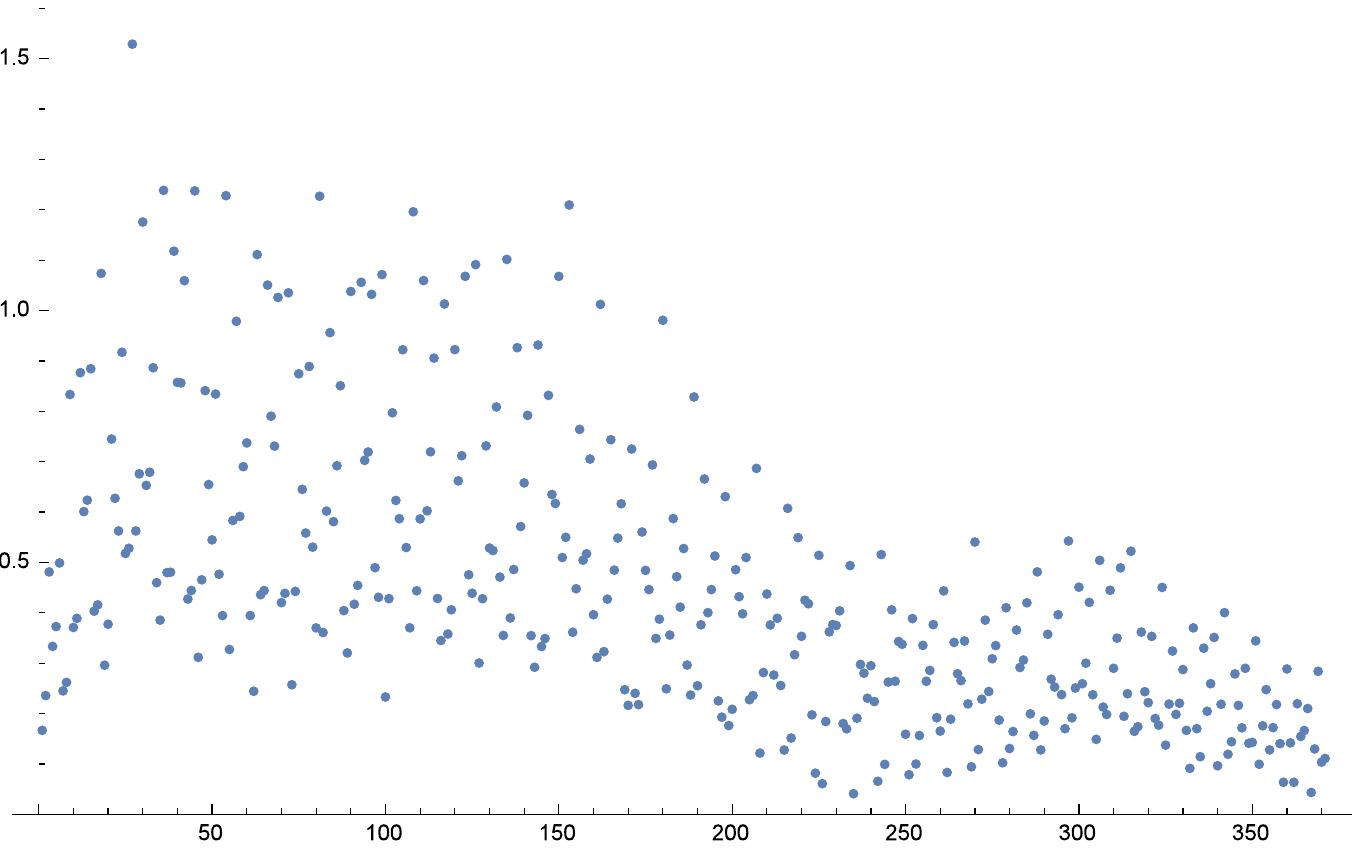}
    \caption{Top: A plot of exact $tf(1, 2, d)$ for $1\leq d\leq 400$. The dashed line is the line $y=1/2$. Bottom: A plot of exact $(tf(1,2,d) - 1/2) * \left(I(d \equiv 0 \pmod{3}) * 2 - 1\right) d^{1/2}$ for $1 \leq d \leq 400,$ or $tf(1,2,d)-1/2$ times the expected sign times an arbitrary amplifying factor. Both plots show all data points.}
    \label{fig:tf12d}
\end{figure}

\begin{figure}[htbp]
	\centering
    \includegraphics[width=\textwidth]{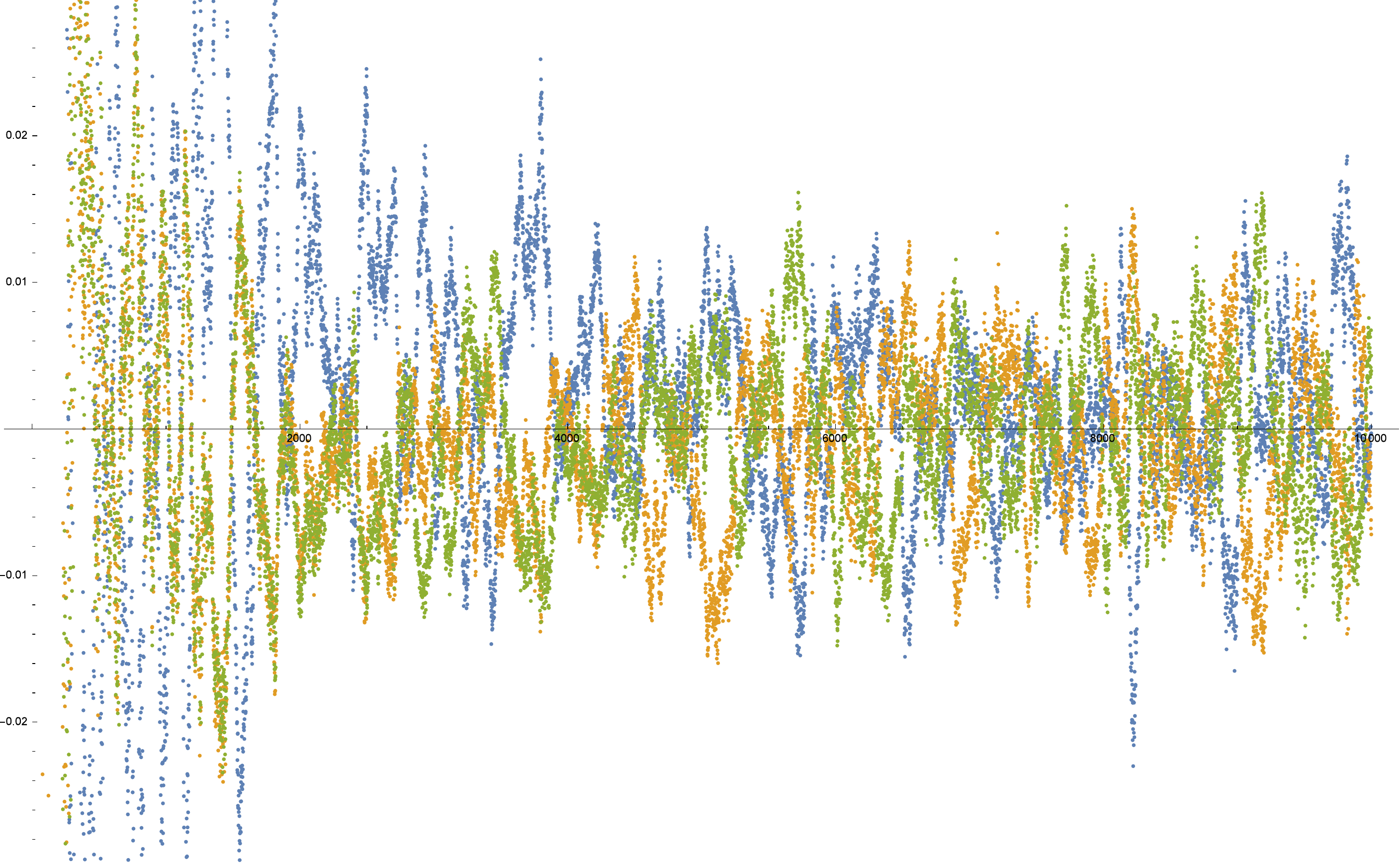}
    \caption{A plot of estimated $(tf(1,2,d) - 1/2) \cdot d^{1/2}$ for $d \leq 30000.$ Green, blue, and orange differentiate points where $d$ is $0, 1,$ and $2 \pmod{3},$ respectively.}
    \label{fig:tf12d-est}
\end{figure}

\begin{figure}[htbp]
	\centering
    \includegraphics[width=\textwidth]{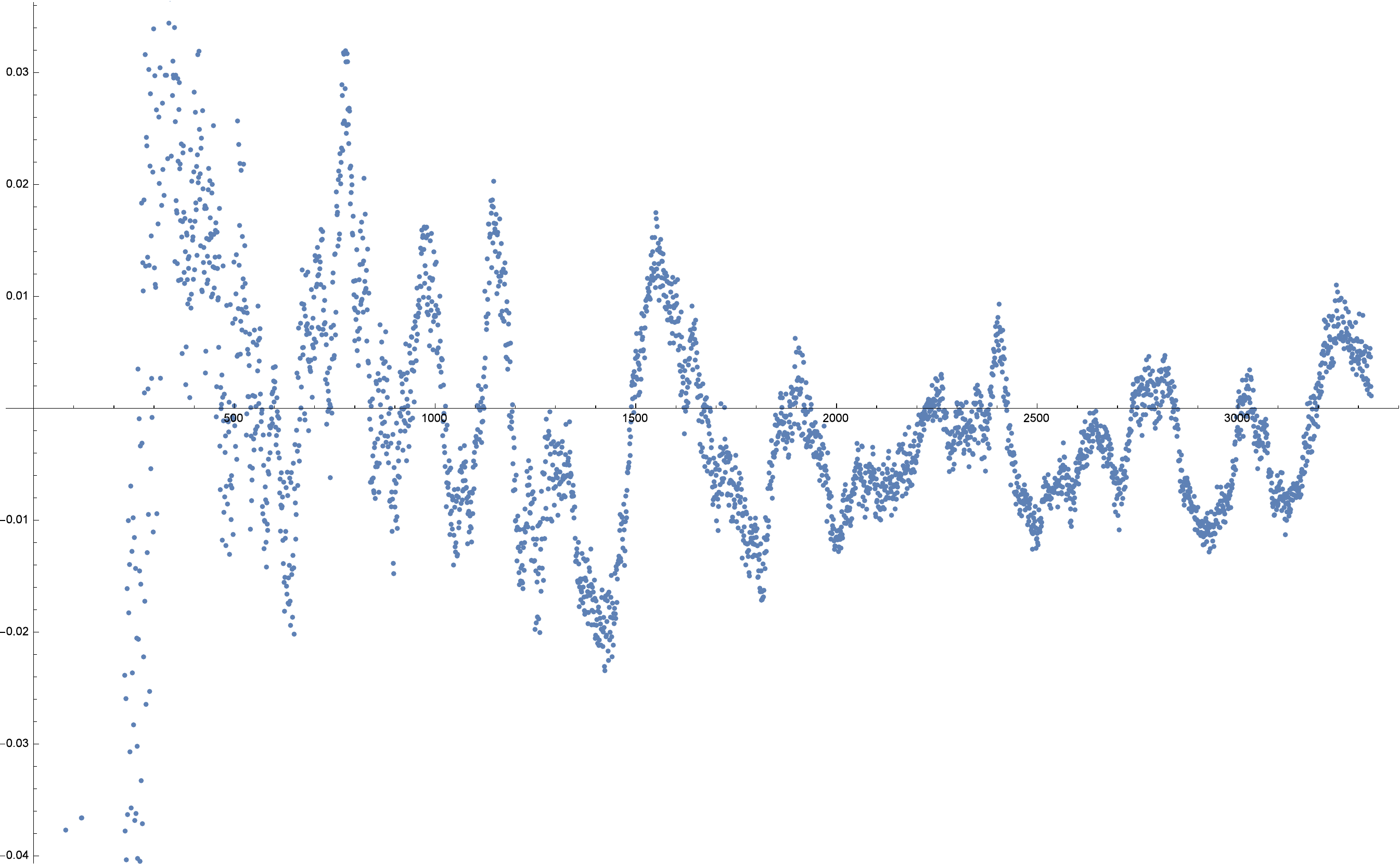}
        \includegraphics[width=\textwidth]{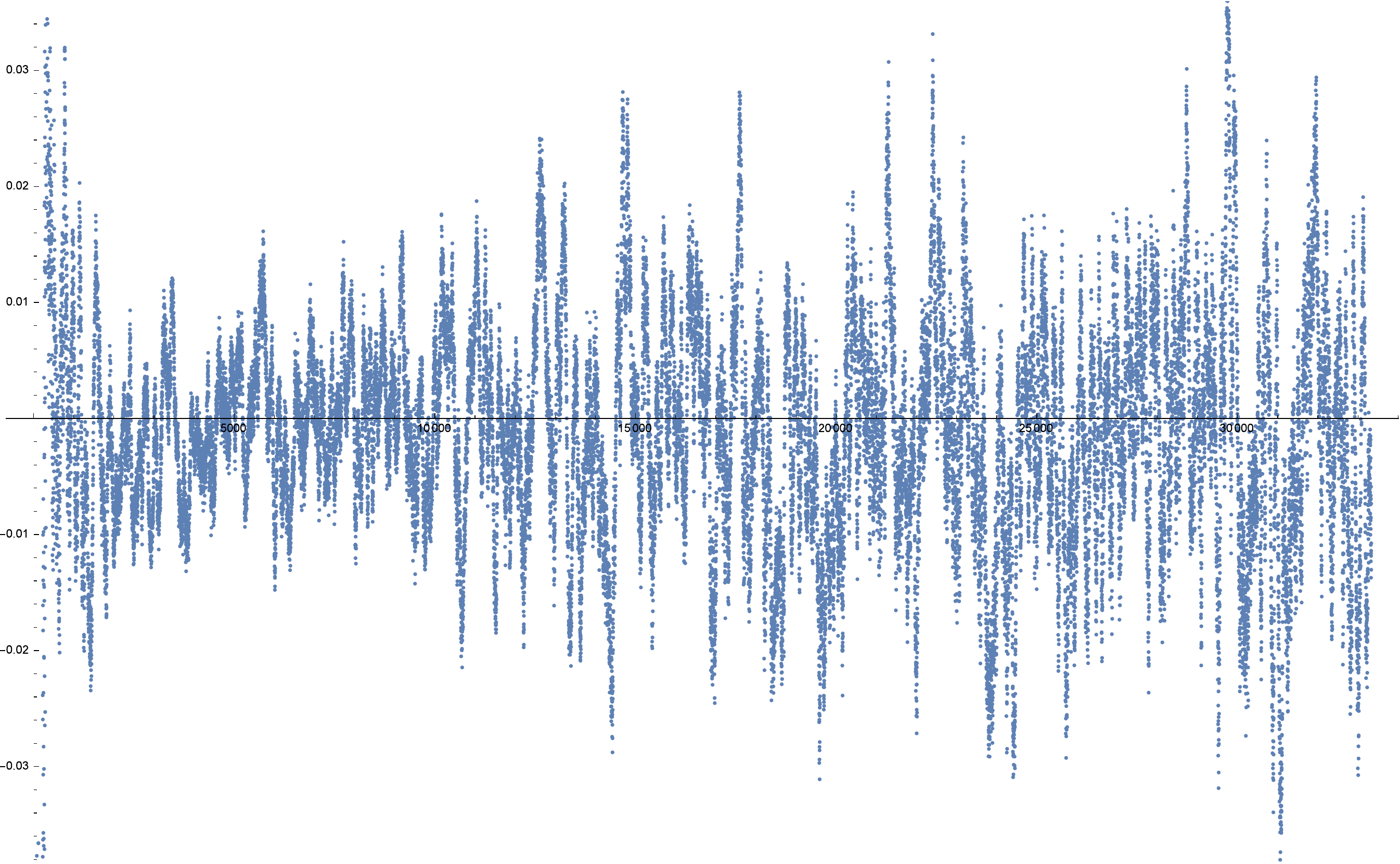}
    \caption{Top: A plot of estimated $(tf(1,2,d) - 1/2) \cdot d^{1/2}$ for $d \equiv 0 \pmod{3},d \leq 10000.$ Bottom: A plot of the same function for $1 \leq d \leq 100000.$ }
    \label{fig:tf12d-est1}
\end{figure}

\begin{figure}[htbp]
	\centering
    \includegraphics[width=0.9\textwidth]{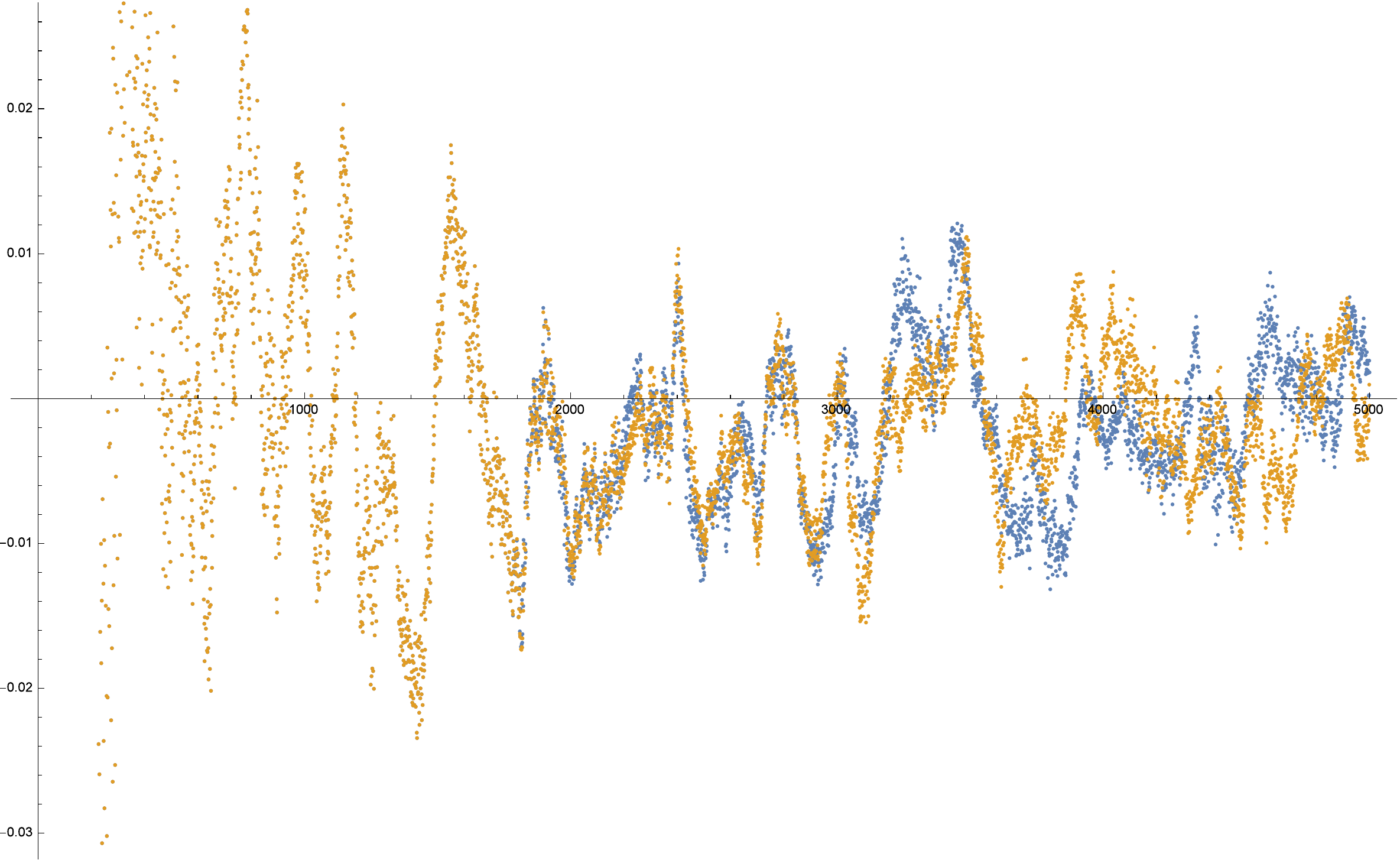}
    \caption{A plot of estimated $tf(1,2,d) \cdot d^{1/2}$ for $d \equiv 0 \pmod{3}, d \leq 15000$ for two separate trials.  Blue is one trial, and orange is another trial. The two series agree below $x = 1700, d = 5100$.}
    \label{fig:tf12d-est2}
\end{figure}

\begin{figure}[htbp]
	\centering
    \includegraphics[width=0.9\textwidth]{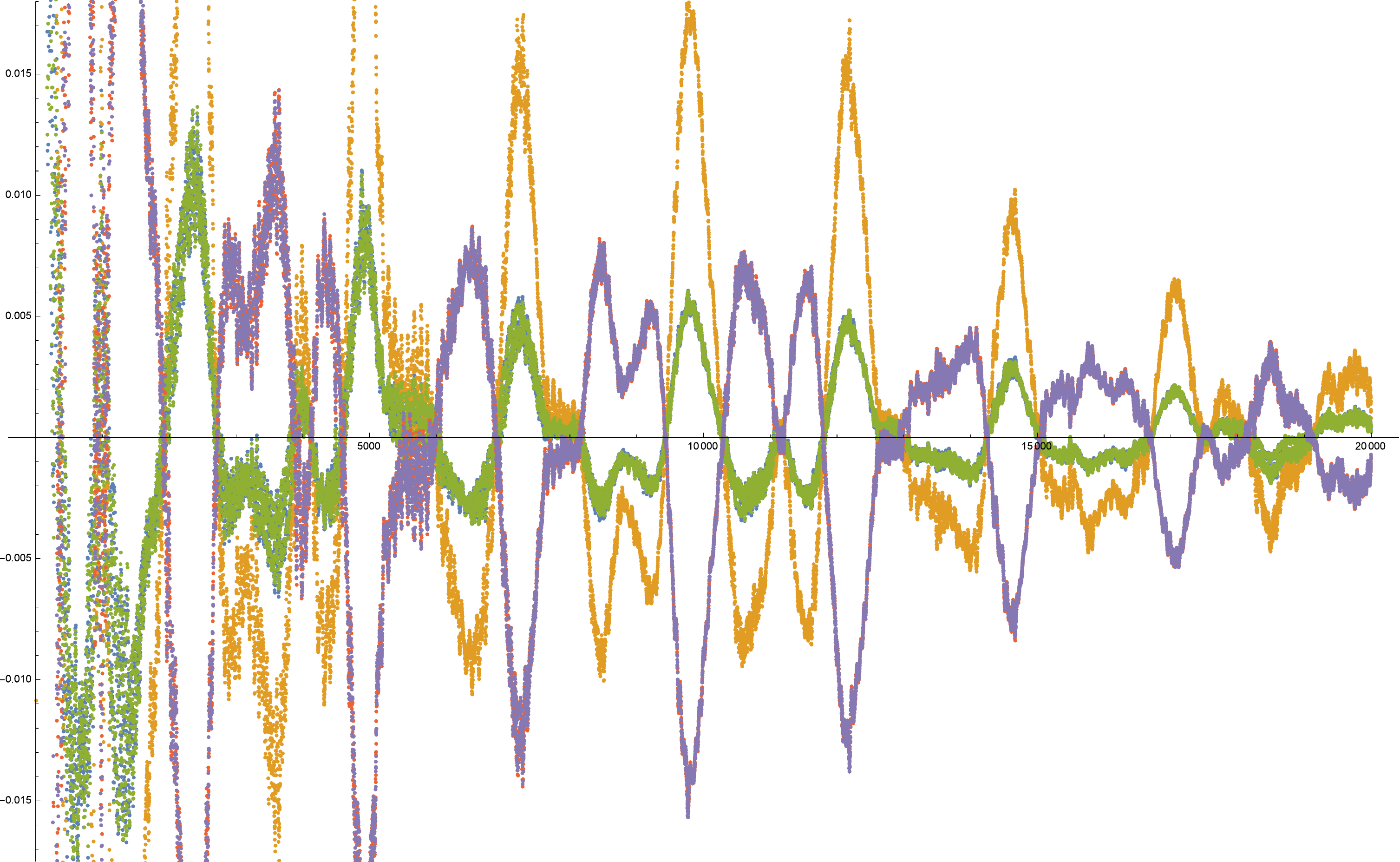}
    \caption{A plot of estimated $tf(2,3,d)$ for $d \leq 100000.$ Blue, orange, green, red, and purple differentiate points where $d$ is $0, 1, 2, 3, $ and $4 \pmod{5},$ respectively.  
    }
    \label{fig:tf23d-est}
\end{figure}

\begin{figure}[htbp]
\centering
    \includegraphics[width=\textwidth]{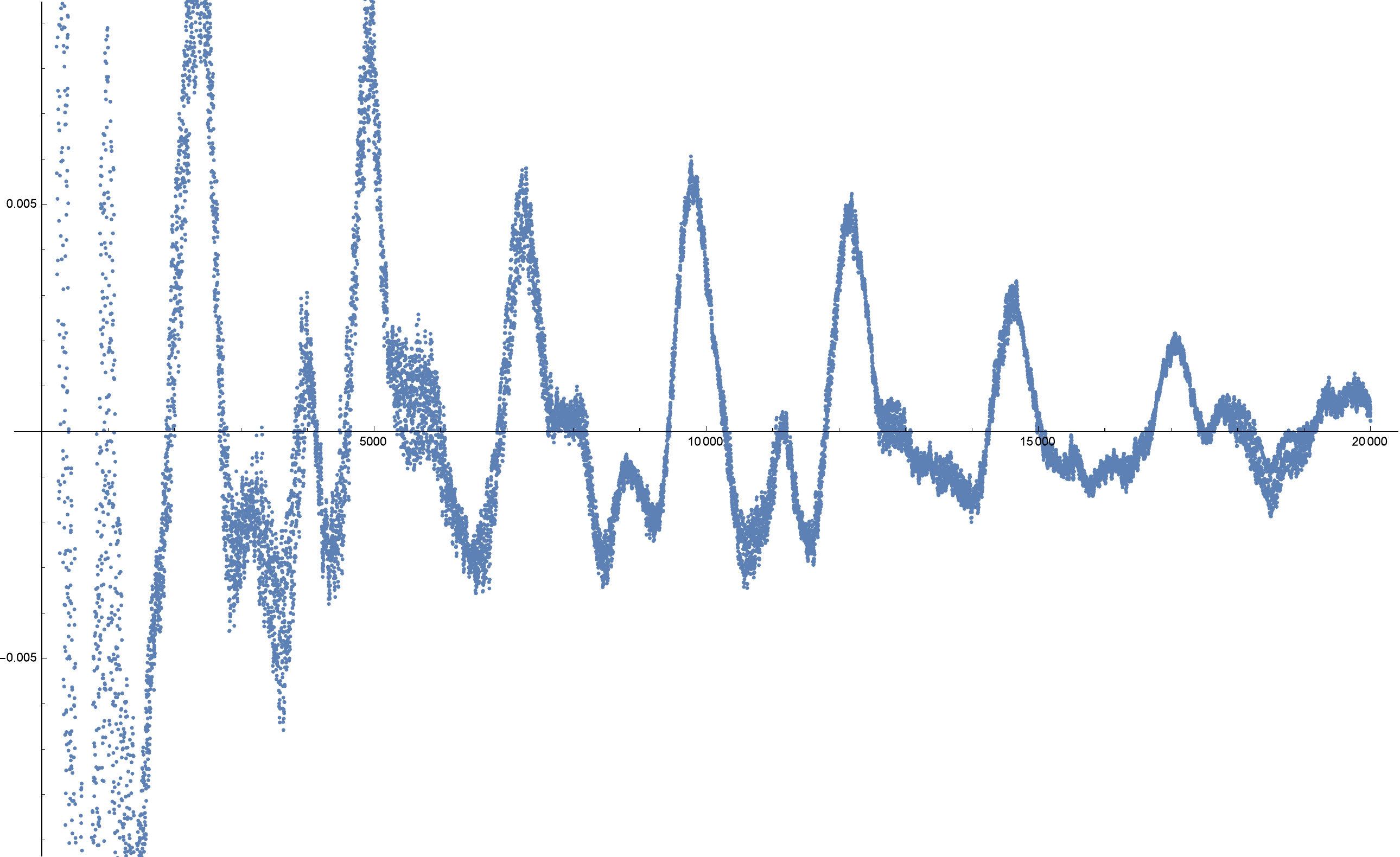}
  \includegraphics[width=\textwidth]{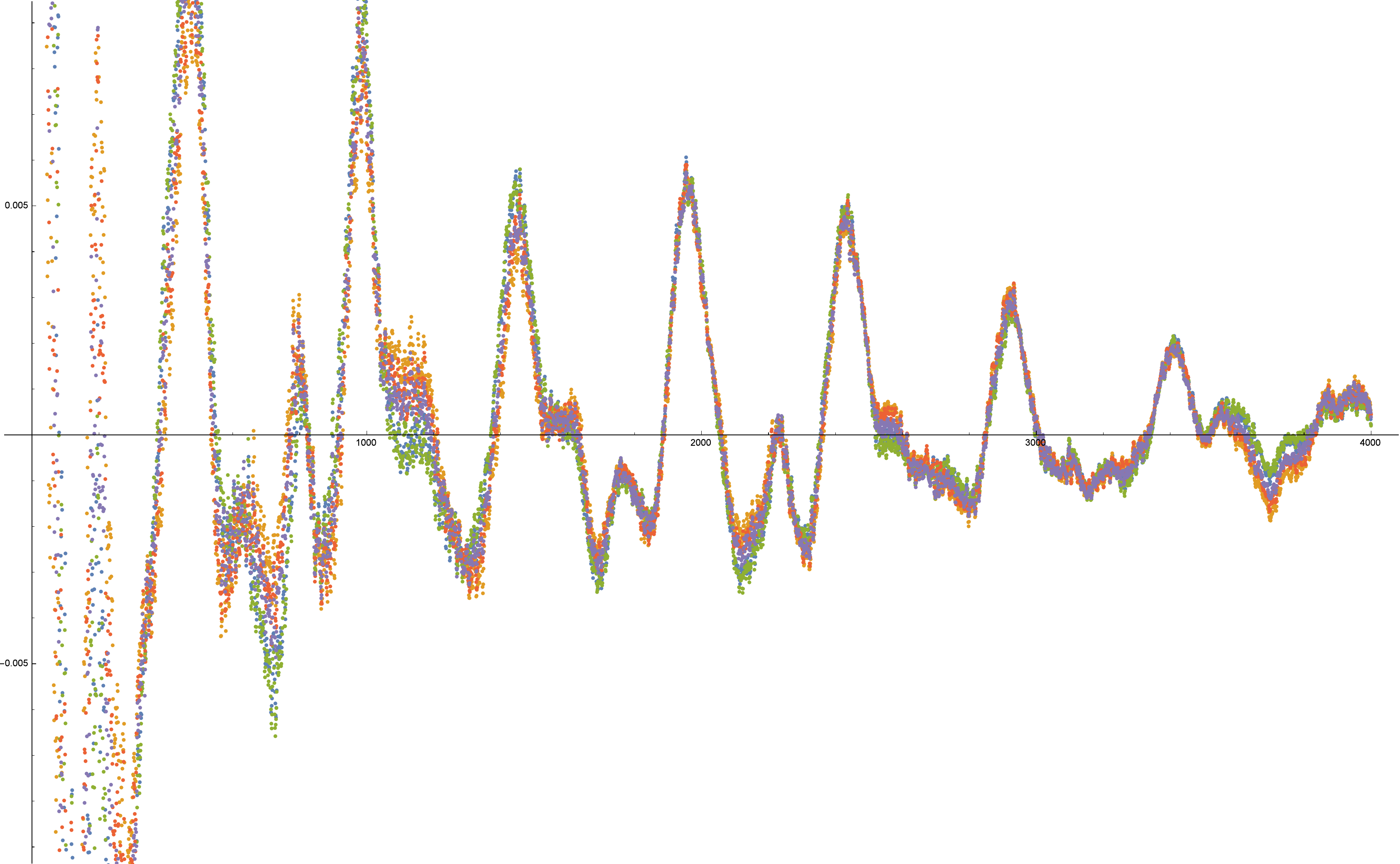}
    \caption{Top: A plot of estimated $tf(2,3,d)$ for $d \equiv 0 \pmod{5}, d \leq 100000$. Bottom: A plot of the same series, except the colors differentiate the $5$ sub-residue classes $\text{mod}$ 25.
    }
    \label{fig:tf23d-est2}
\end{figure}

\begin{figure}[htbp]
\centering
\includegraphics[width=\textwidth]{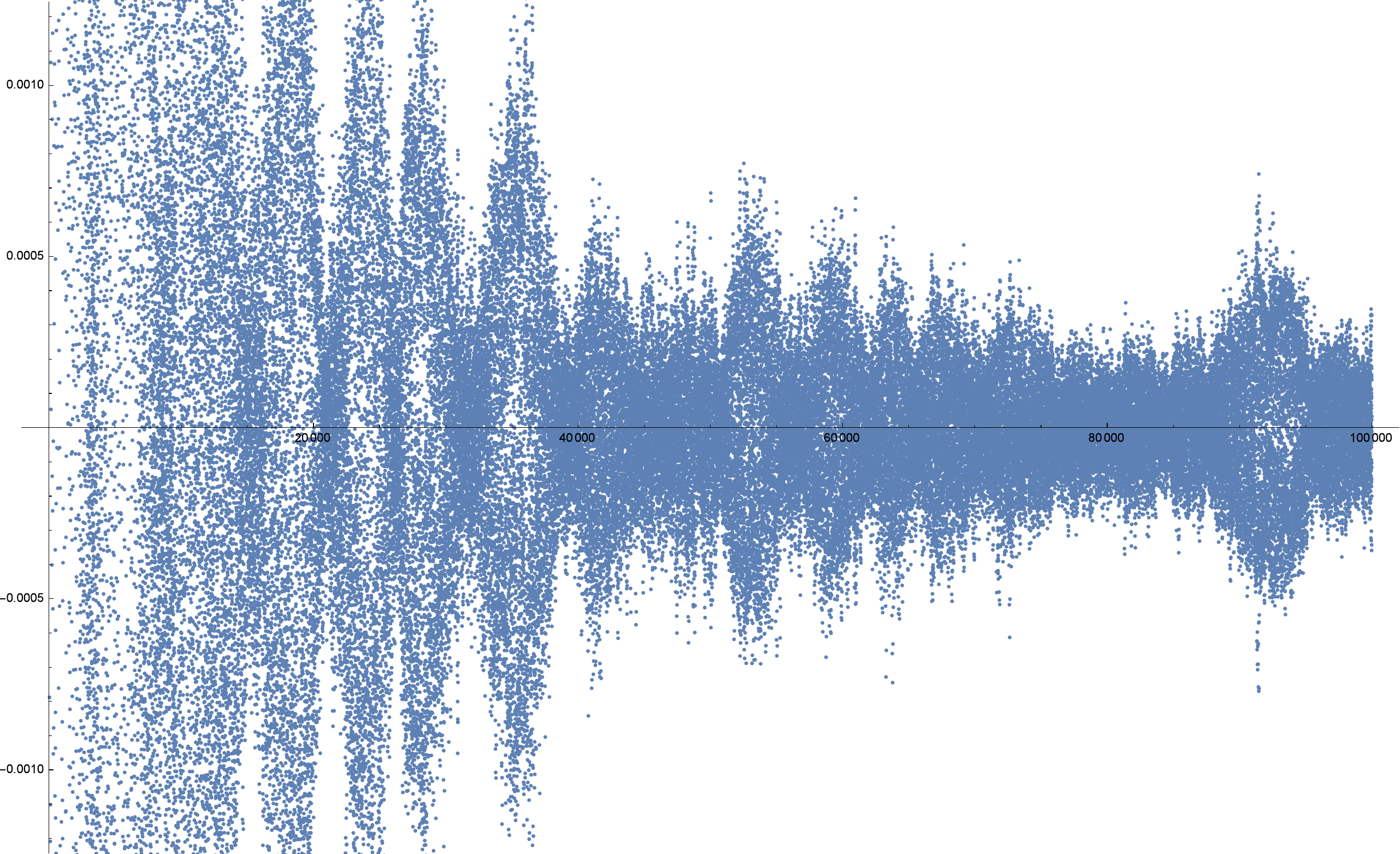}
\includegraphics[width=\textwidth]{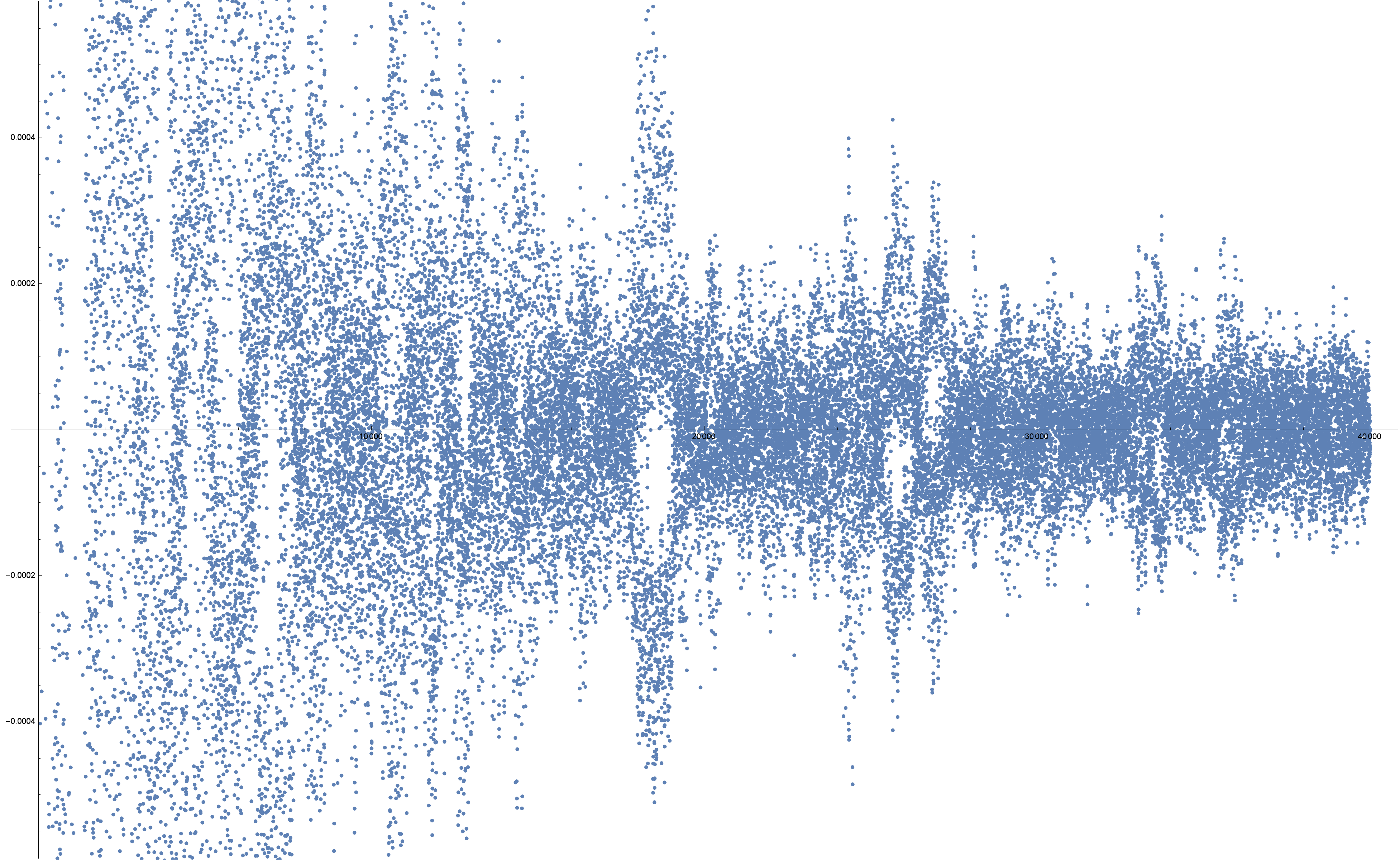}
\caption{Top: A plot of estimated $h(d) := tf(2,3,d) -  2 \cos(2 \pi / 5) tf(2,3,d+1) + tf(2, 3, d+2)$ for $d \leq 100000.$ Bottom: The same series, except restricted to $d \equiv 0 \pmod{5}$ and extended to $d \leq 200000.$
}
\label{fig:tf23d-res1}
\end{figure}

\begin{figure}[htbp]
\centering
\includegraphics[width=\textwidth]{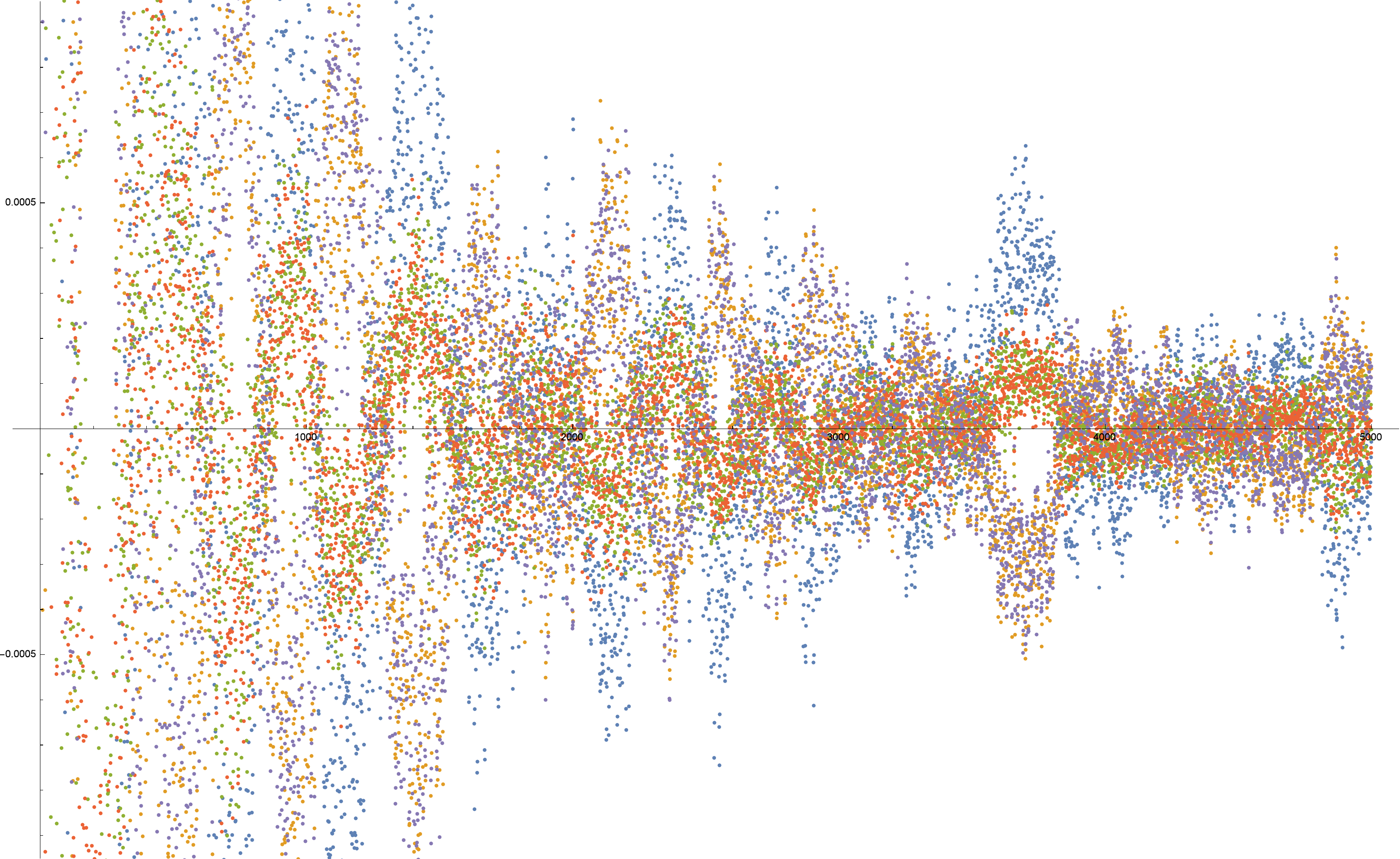}
\includegraphics[width=\textwidth]{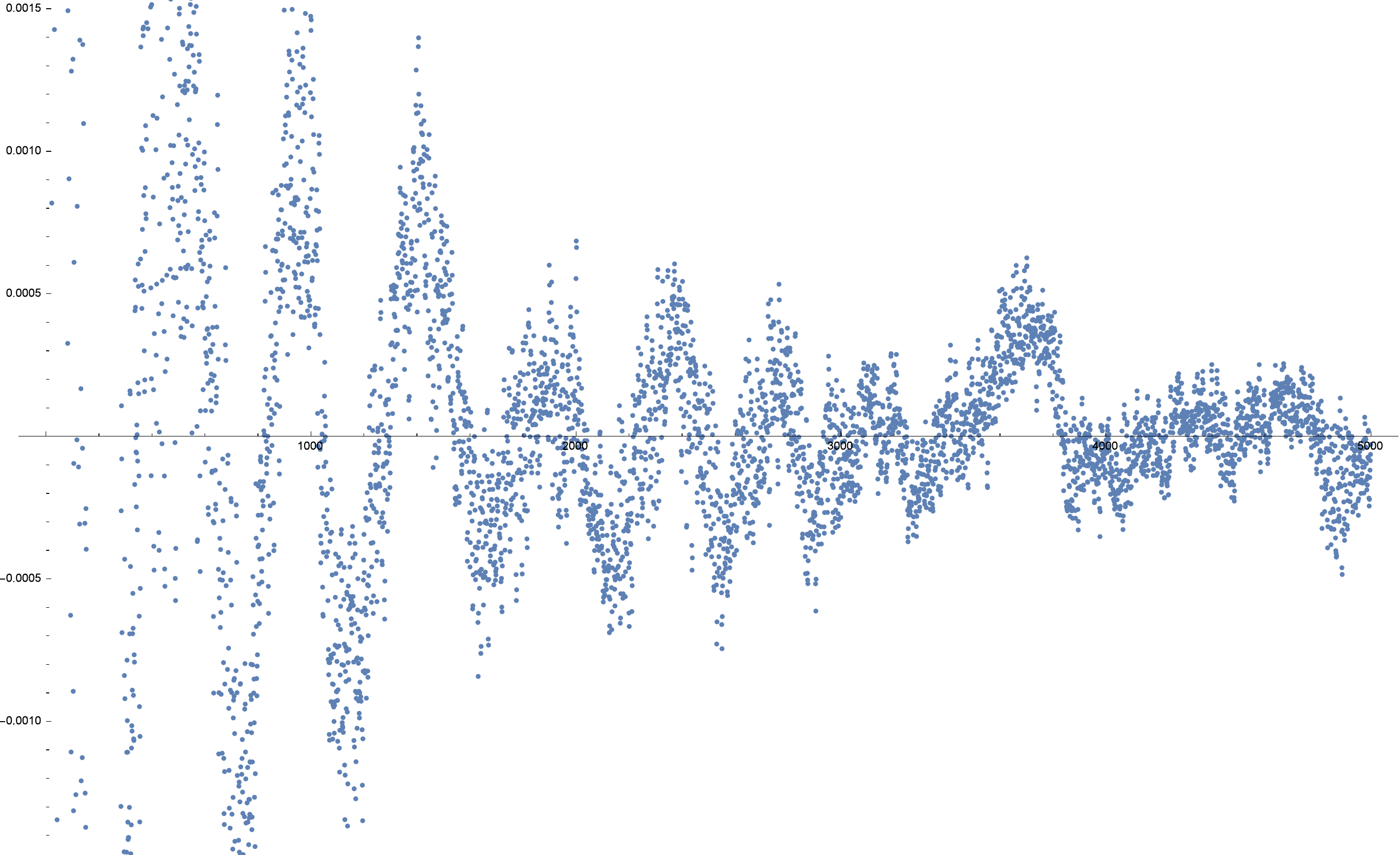}
\caption{Top: A plot of $g(c) := tf(2, 3, 5c) - 2 \cos(2 \pi / 5) tf(2,3,5c+1) + tf(2, 3, 5c+2)$ for $c \leq 25000$, and the five colors correspond to residue classes $\text{mod}$ 5. Bottom: we show just one of these classes.
}
\label{fig:tf23d-res2}
\end{figure}

\begin{figure}[htbp]
\centering
\includegraphics[width=\textwidth]{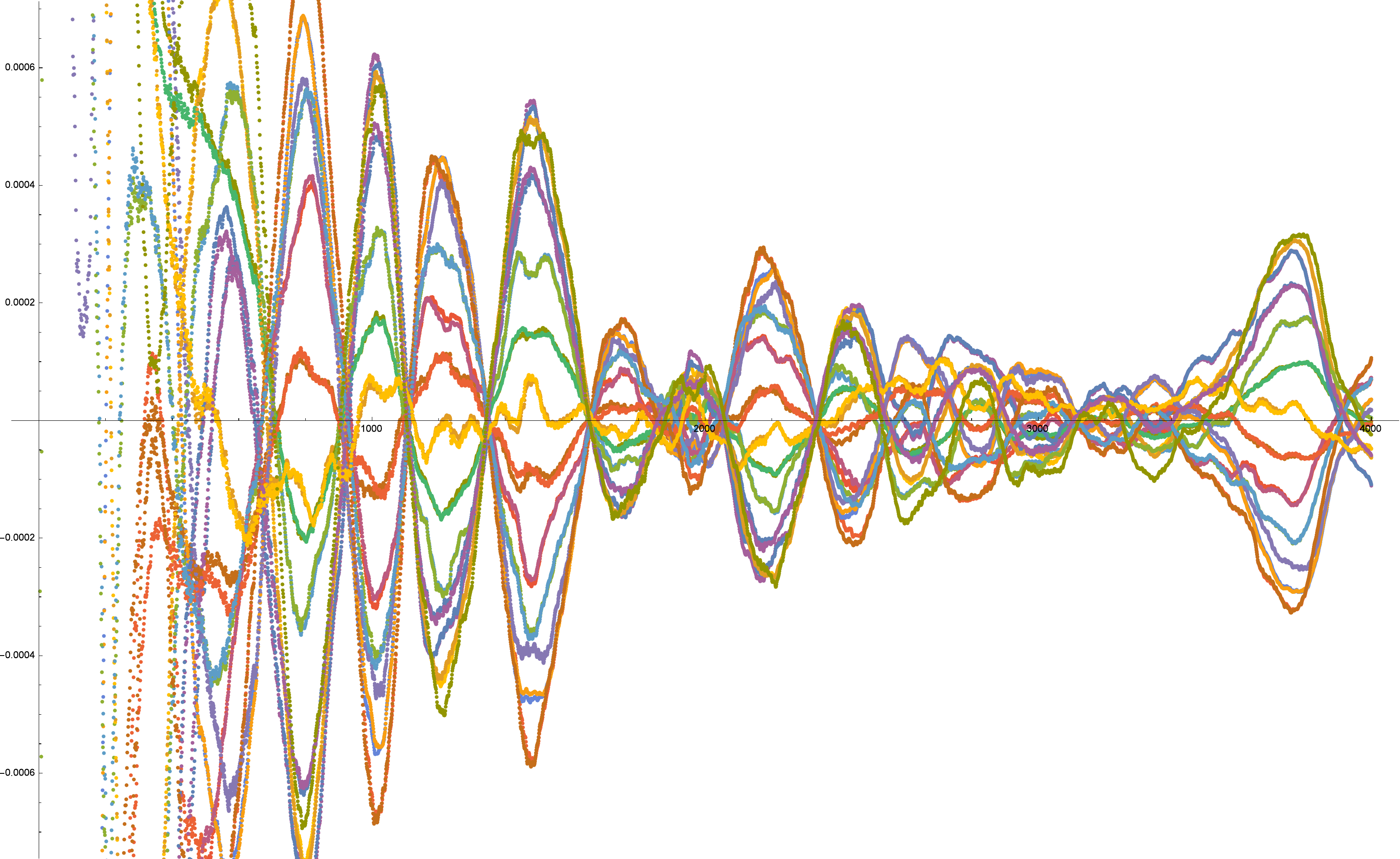}
\includegraphics[width=\textwidth]{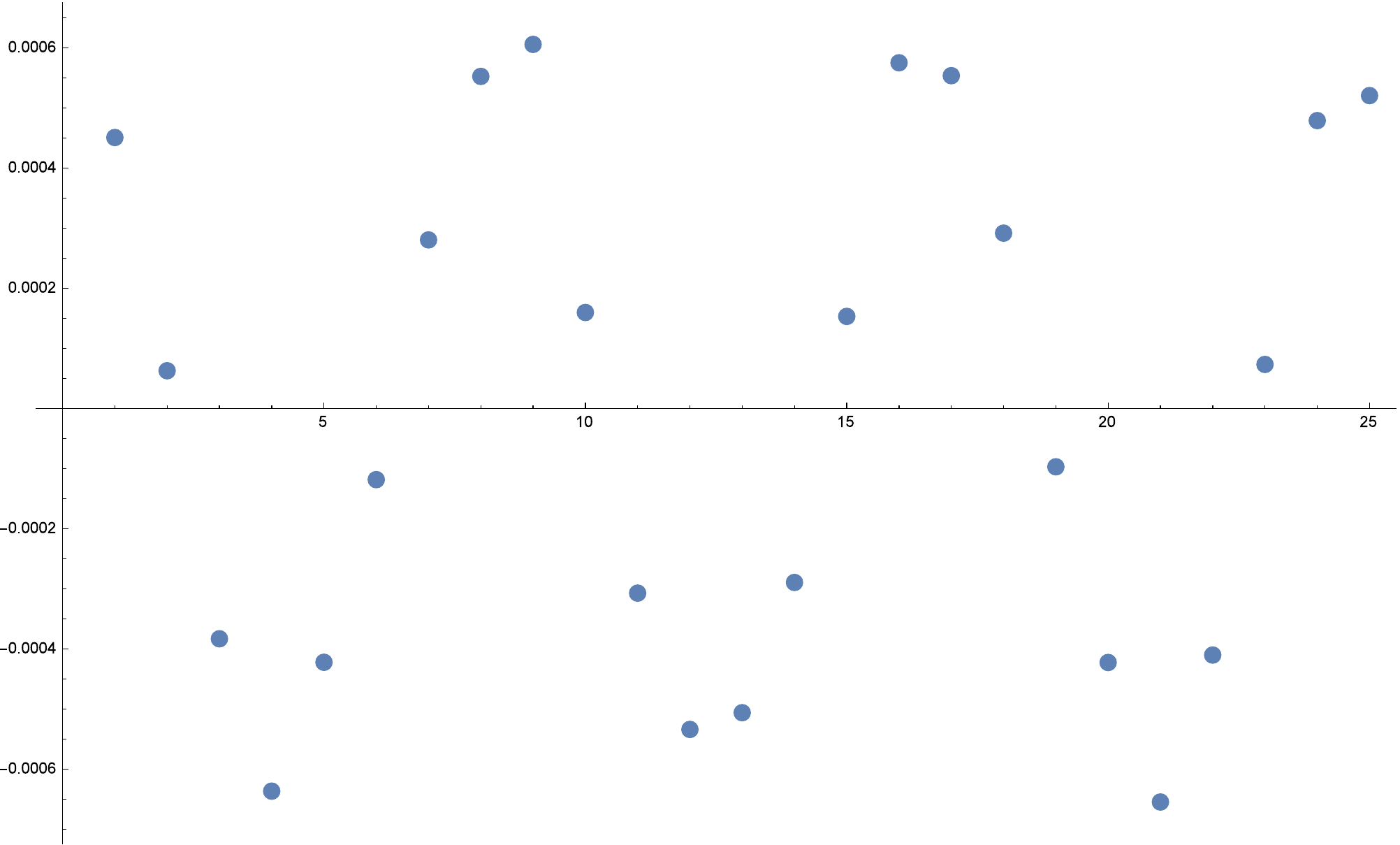}
\caption{Top: A plot of $h(d) = tf(2,3,d) -  2 \cos(2 \pi / 5) tf(2,3,d+1) + tf(2, 3, d+2)$ split into residues class $\text{mod}$ 25, and in each class, we take an exponential moving average with time constant $100.$ Bottom: A plot of $h(25000+d)$ vs $d$ for $1 \le d \le 25.$
}
\label{fig:tf23d-res3}
\end{figure}

\begin{figure}[htbp]
	\centering
    \includegraphics[width=\textwidth]{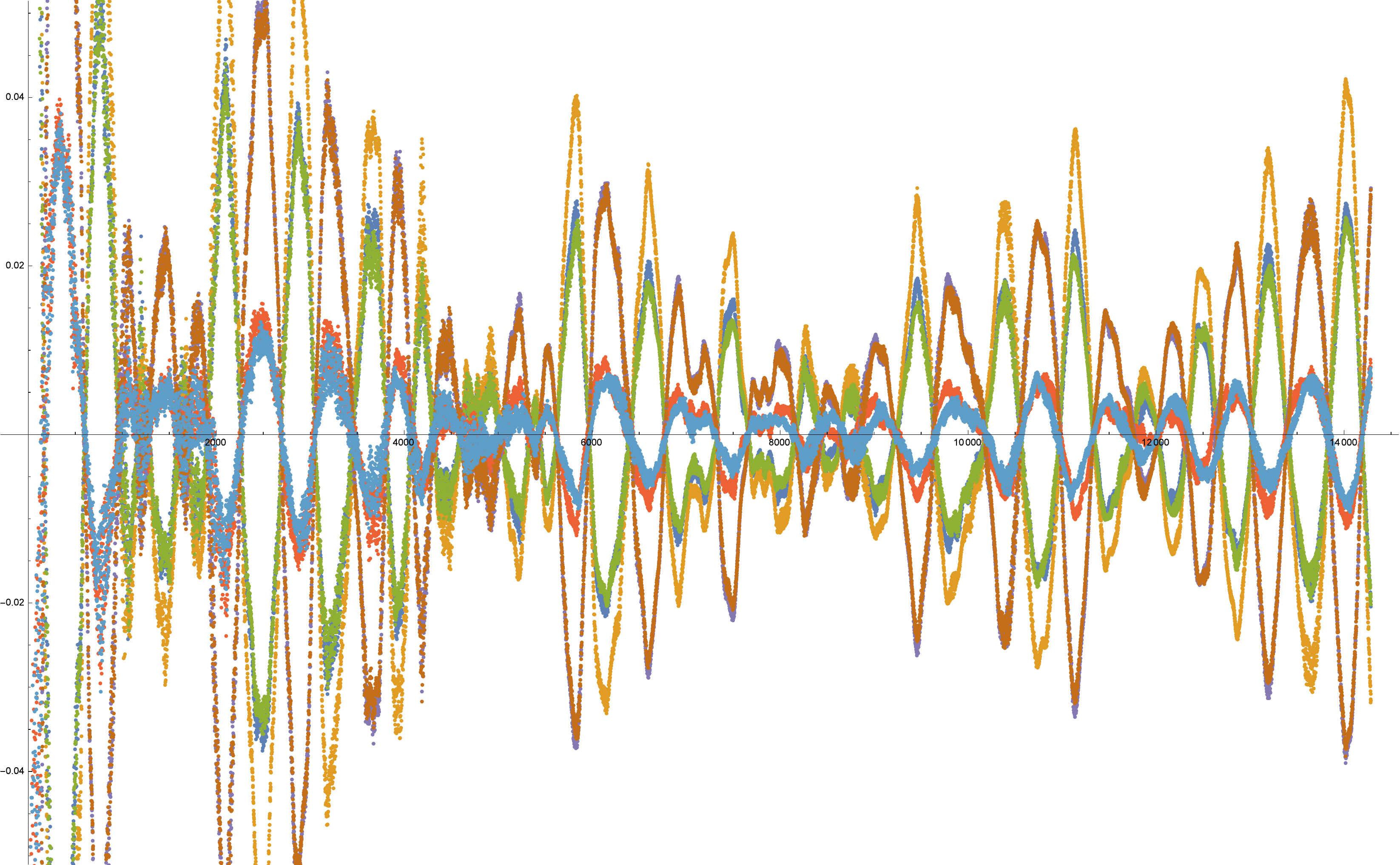}
    \caption{A plot of estimated $tf(3,4,d)$ for $d \leq 100000.$ Blue, orange, green, red, and purple, orange-red, and baby blue differentiate points where $d$ is $0, 1, 2, 3, 4, 5$ and $6 \pmod{7},$ respectively. 
    }
    \label{fig:tf34d-est}
\end{figure}

\begin{figure}[htbp]
	\centering
    \includegraphics[width=\textwidth]{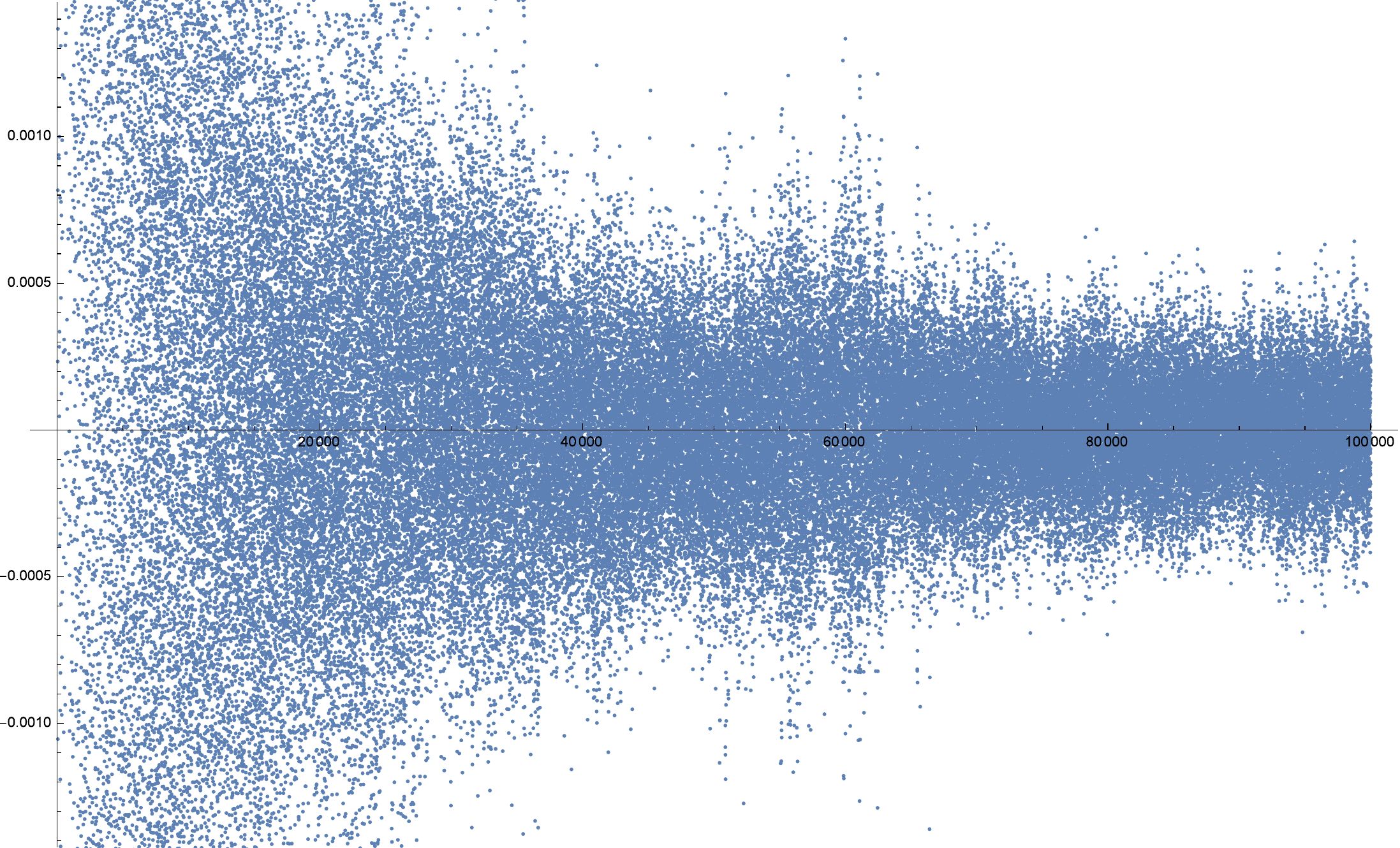}
    \caption{A plot of  $tf(1,4,d)$ for $d \leq 100000.$
    }
    \label{fig:tf14d-est}
\end{figure}

\begin{figure}[htbp]
	\centering
    \includegraphics[width=\textwidth]{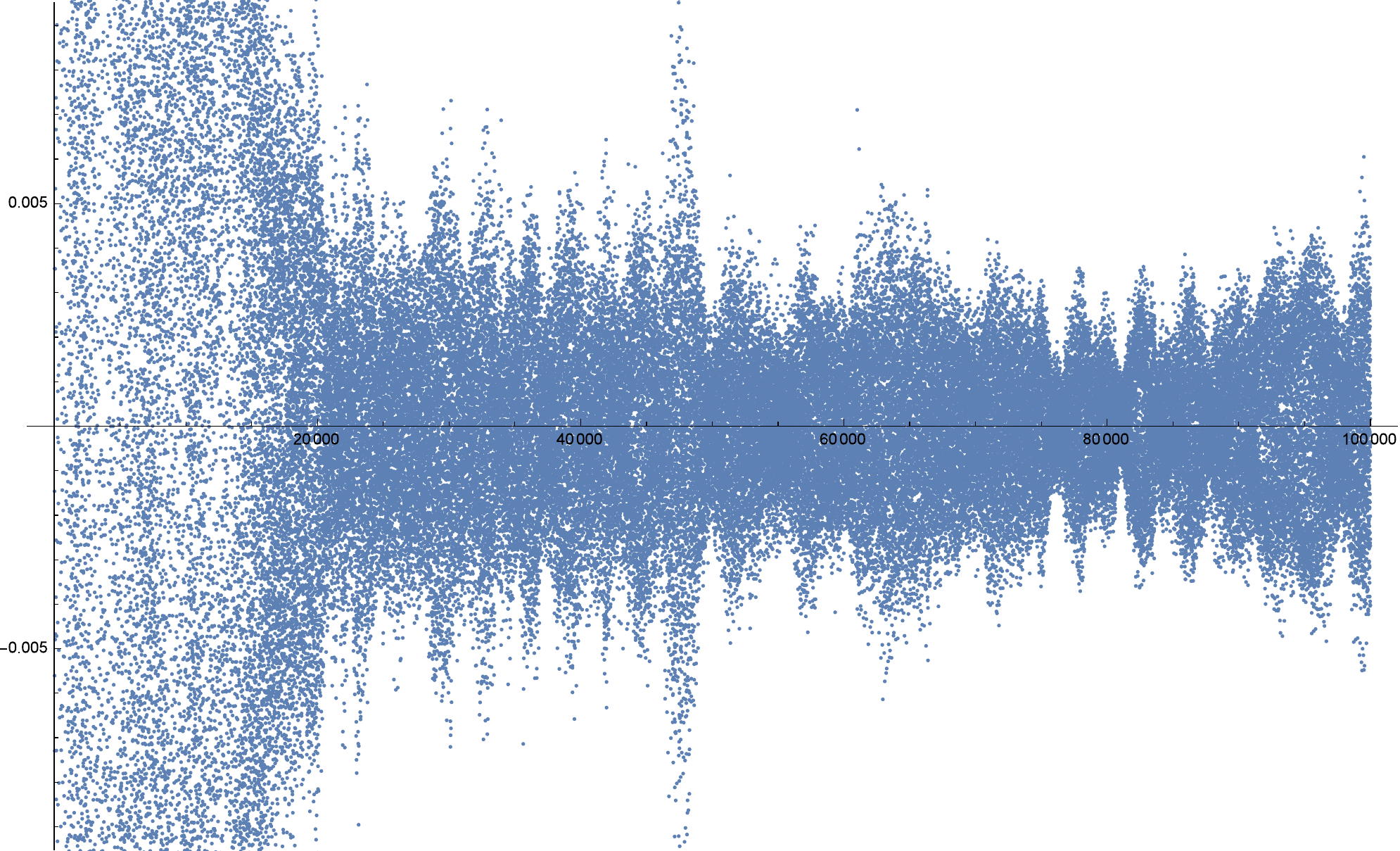}
    \caption{A plot of  $tf(1,6,d)$ for $d \leq 100000.$
    }
    \label{fig:tf16d-est}
\end{figure}

\begin{figure}[htbp]
	\centering
    \includegraphics[width=\textwidth]{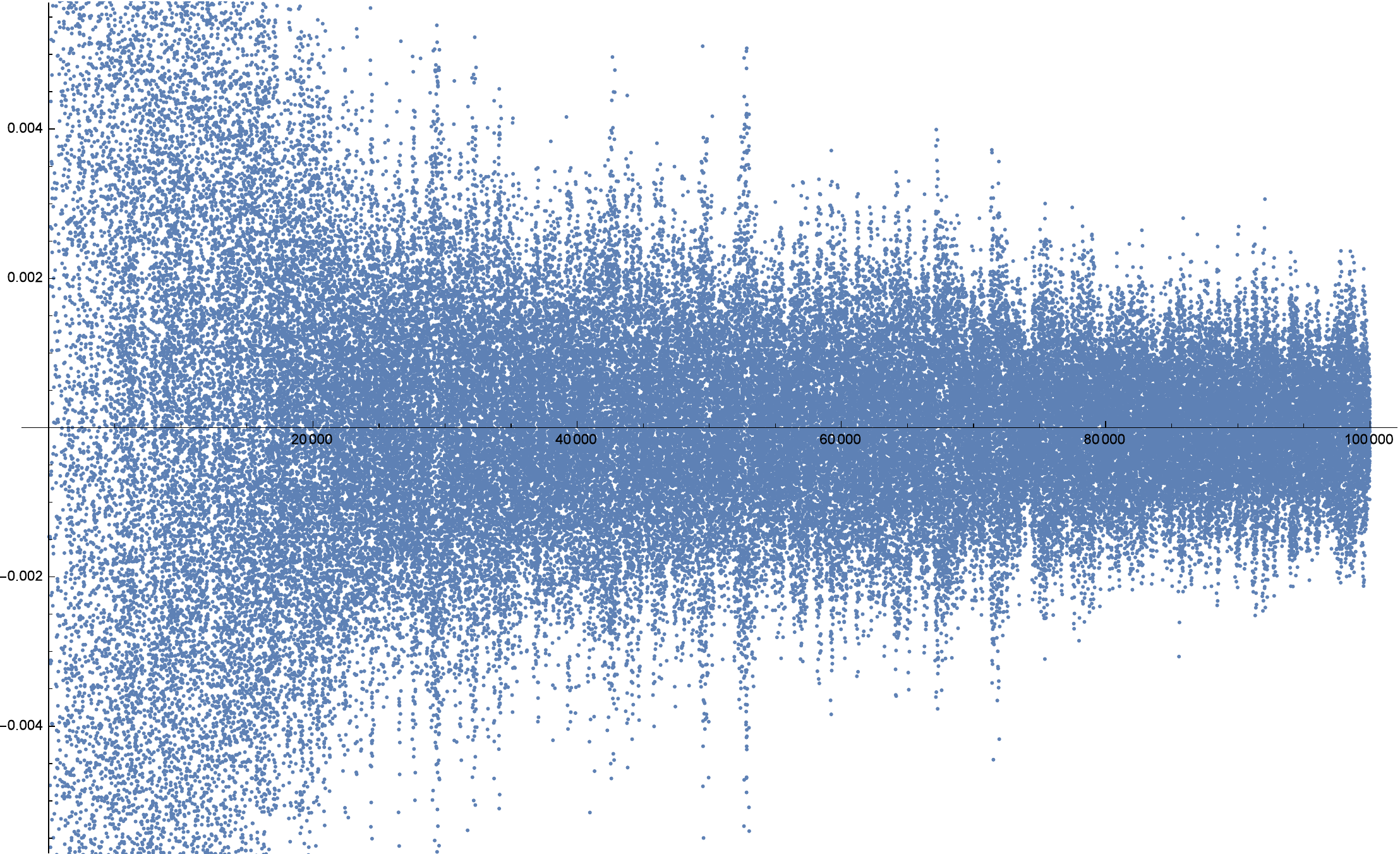}
    \caption{A plot of $tf(2,5,d)$ for $d \leq 100000.$
    }
    \label{fig:tf25d-est}
\end{figure}

\begin{figure}[htbp]
\centering
\includegraphics[width = 0.48 \textwidth]{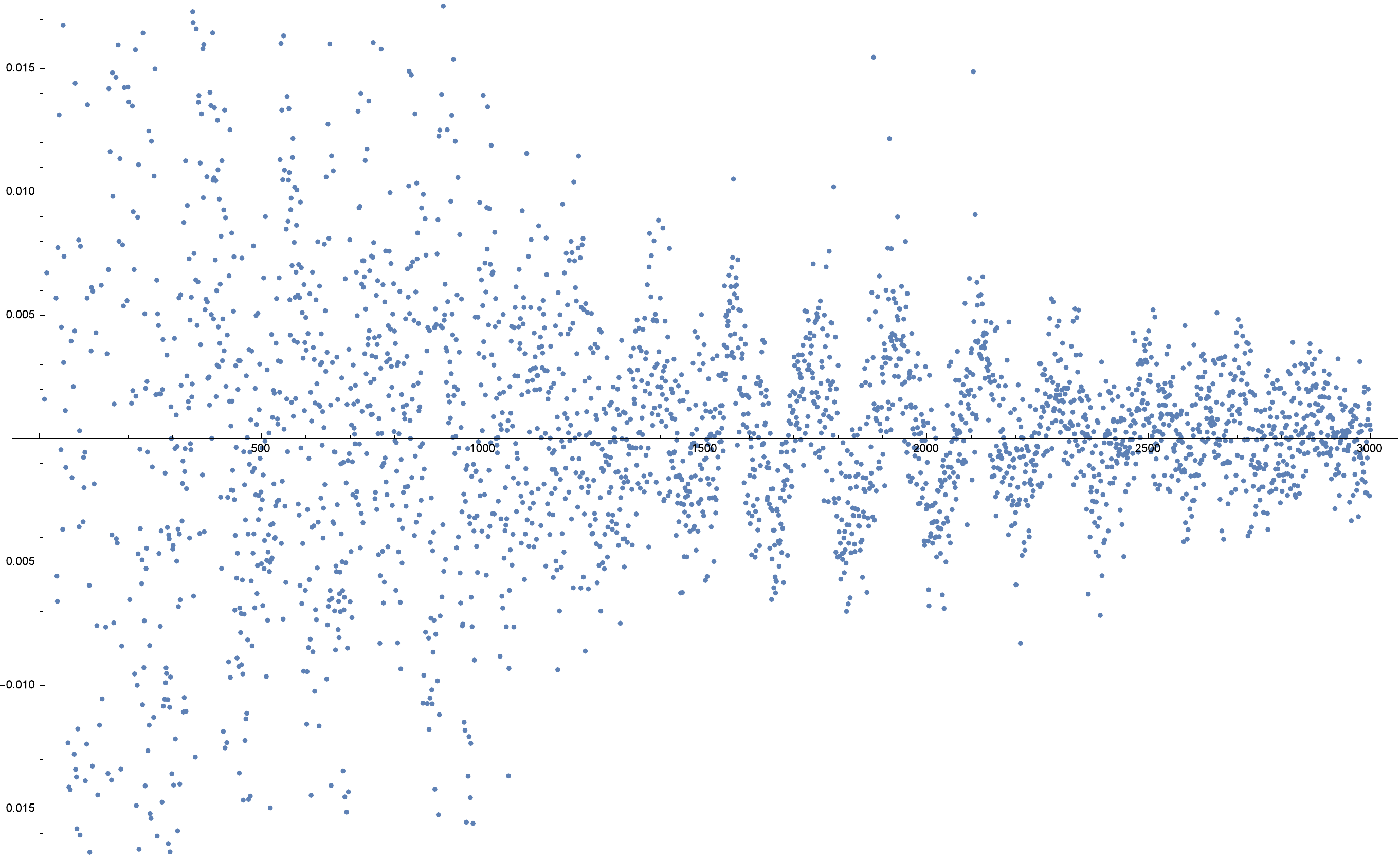}
\includegraphics[width = 0.48 \textwidth]{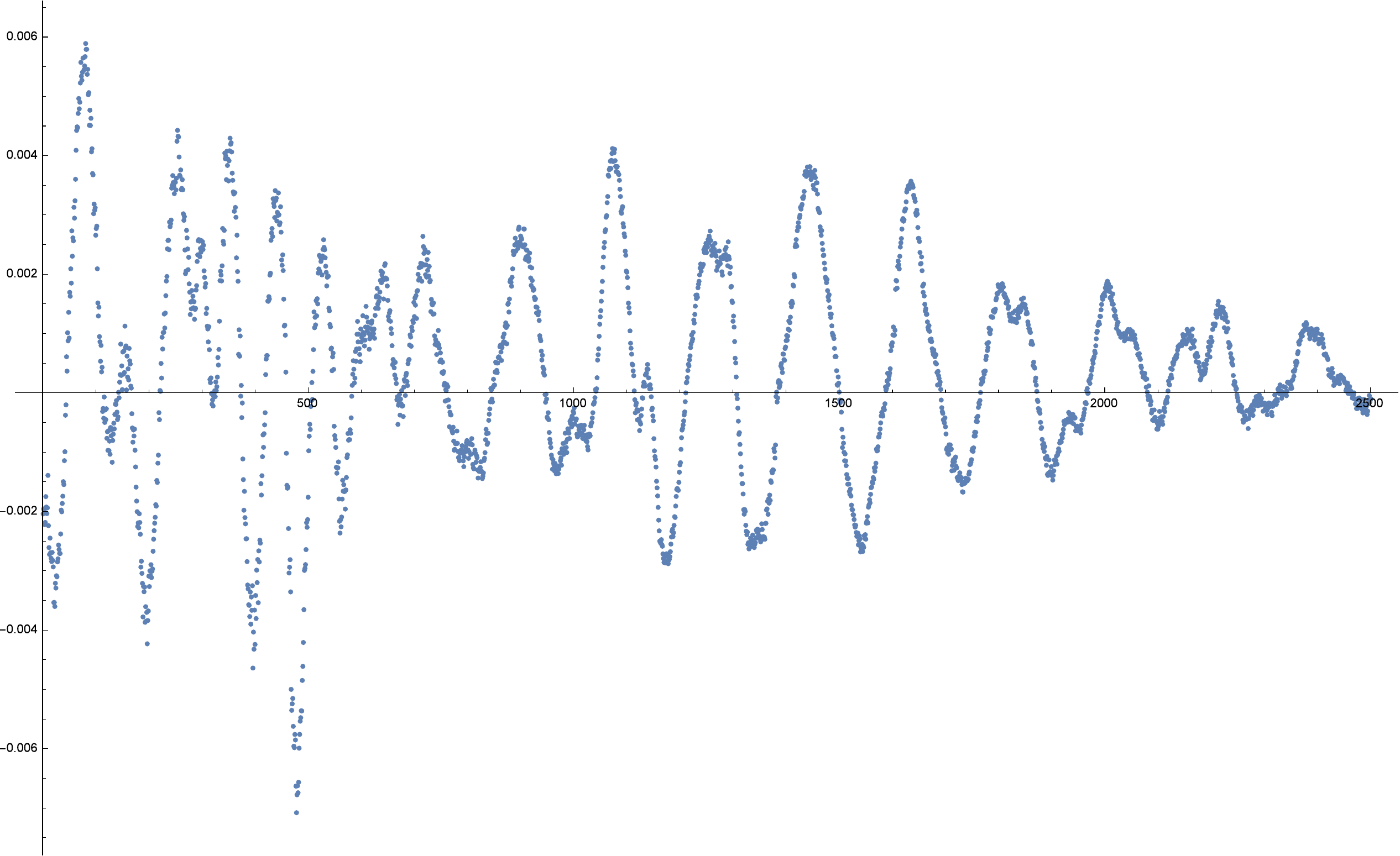}
\includegraphics[width = \textwidth]{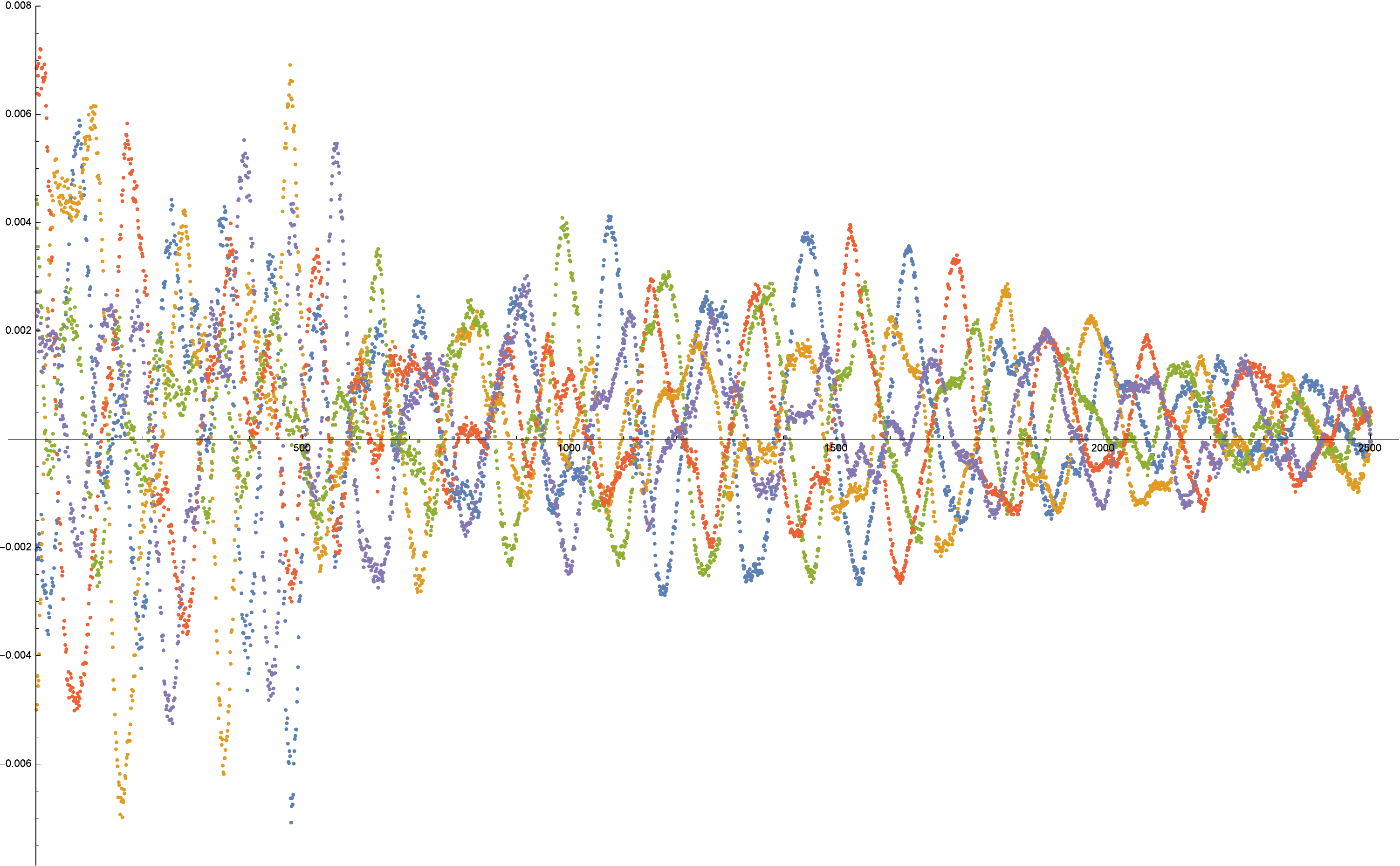}
\includegraphics[width = 0.48 \textwidth]{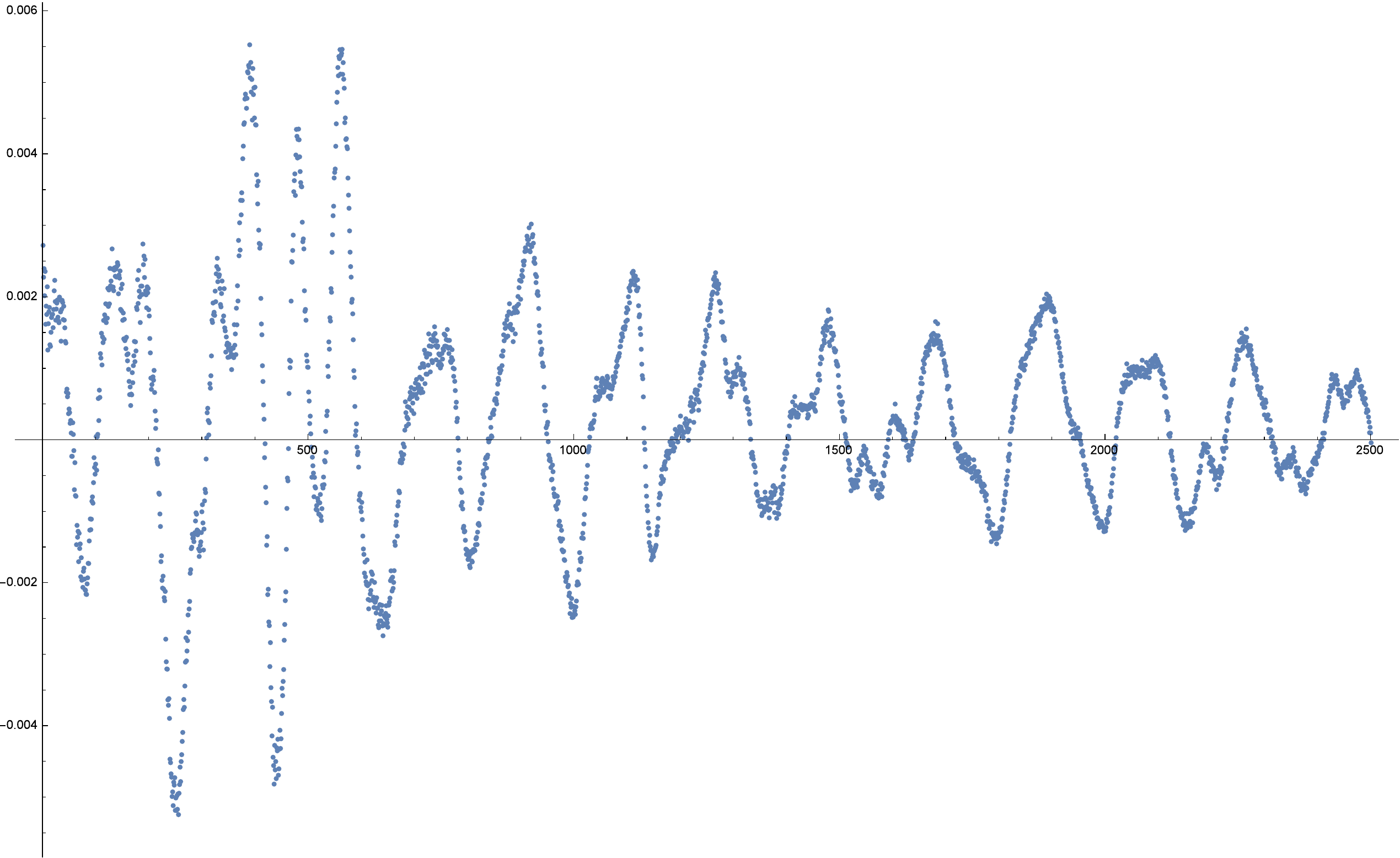}
\caption{First plot: A plot of $tf(1,4,d)$ restricted to $d \equiv 0 \pmod{5}, 0 \le d \le 15000.$ Second: The exponential mean of the first plot with time constant 20 and $2500 \le d \le 15000.$ Third: Like the second plot, but with all five classes $\text{mod}$ $5$. Fourth: Like the second plot, but $d \equiv 4 \pmod{5}.$
}
\label{fig:tf14d-est2}
\end{figure}

\begin{figure}[htbp]
\centering
\includegraphics[width = \textwidth]{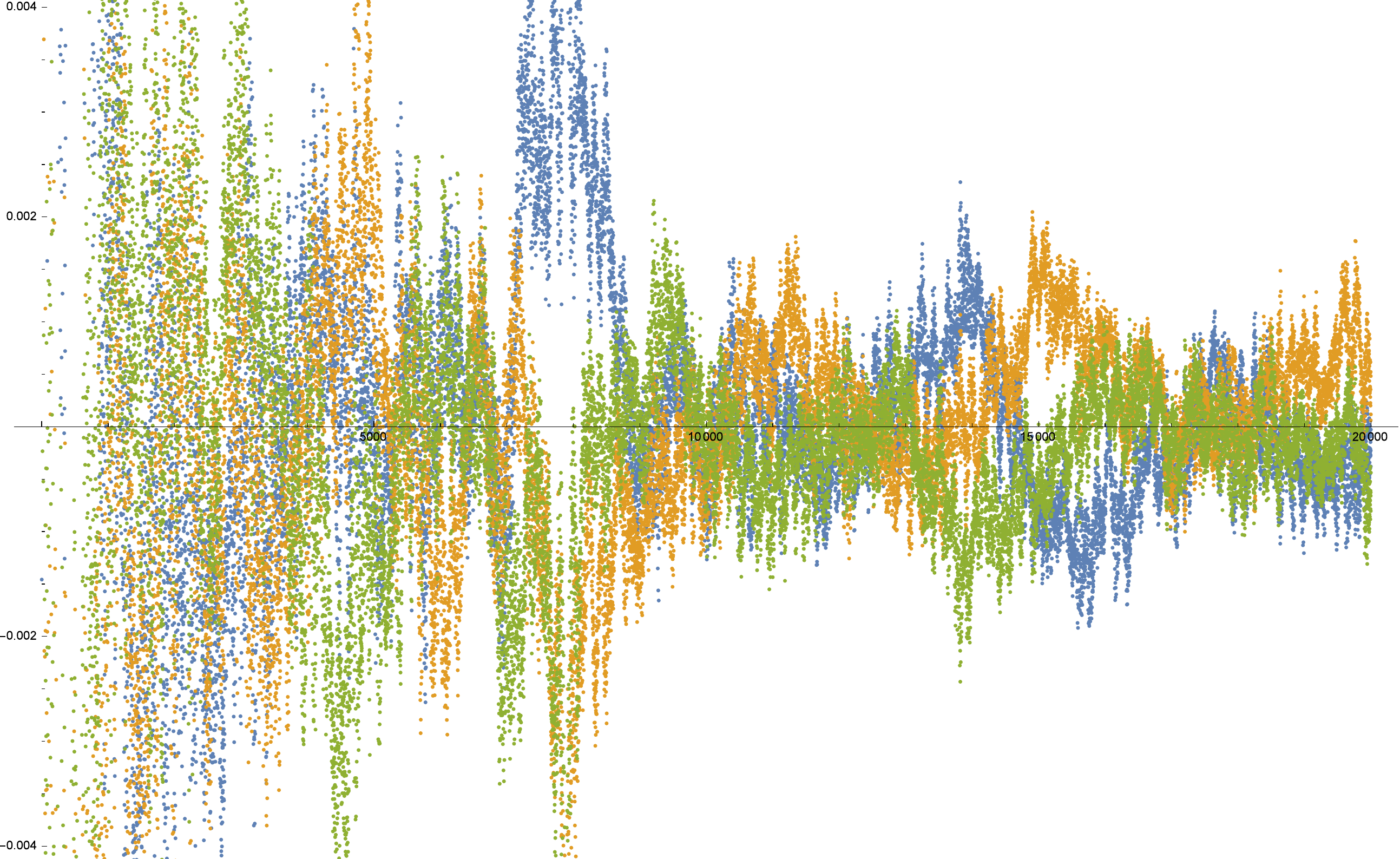}
\includegraphics[width = \textwidth]{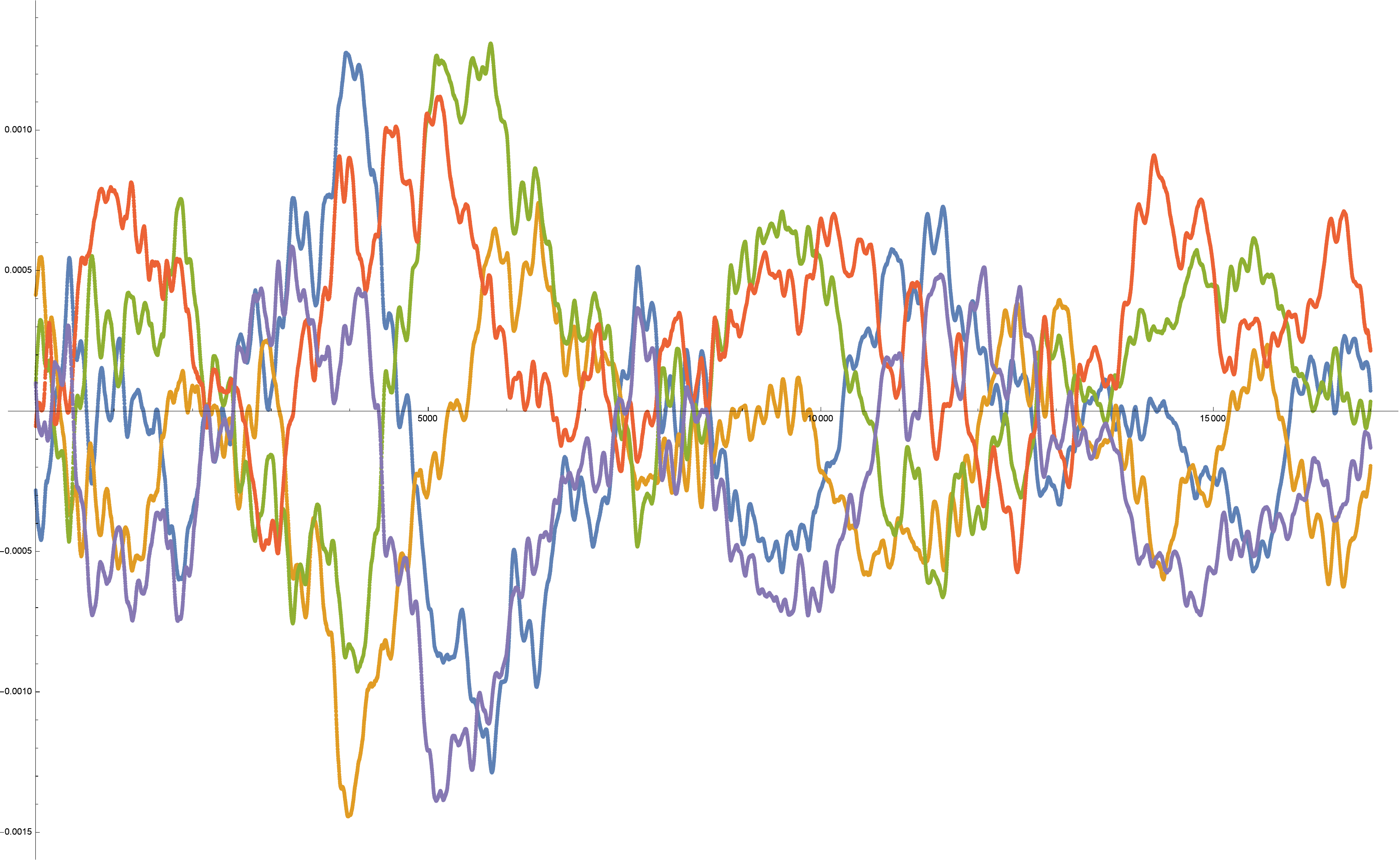}
\caption{Top: A plot of $tf(1,6,d)$ restricted to $d \equiv 0 \pmod{7}, 0 \le d \le 210000.$ The three colors distinguish the three subclasses $\text{mod}$ $21.$ Bottom: A plot of $tf(1,6,d)$ restricted to $210000 \le d \le 567000$ and $Mod(d,21) \in \{0, 4, 6, 8, 12\},$ after an exponential mean with time constant $100$ is applied to each class separately.
}
\label{fig:tf16d-est2}
\end{figure}

\begin{figure}[htbp]
\centering
\includegraphics[width = 0.67\textwidth]{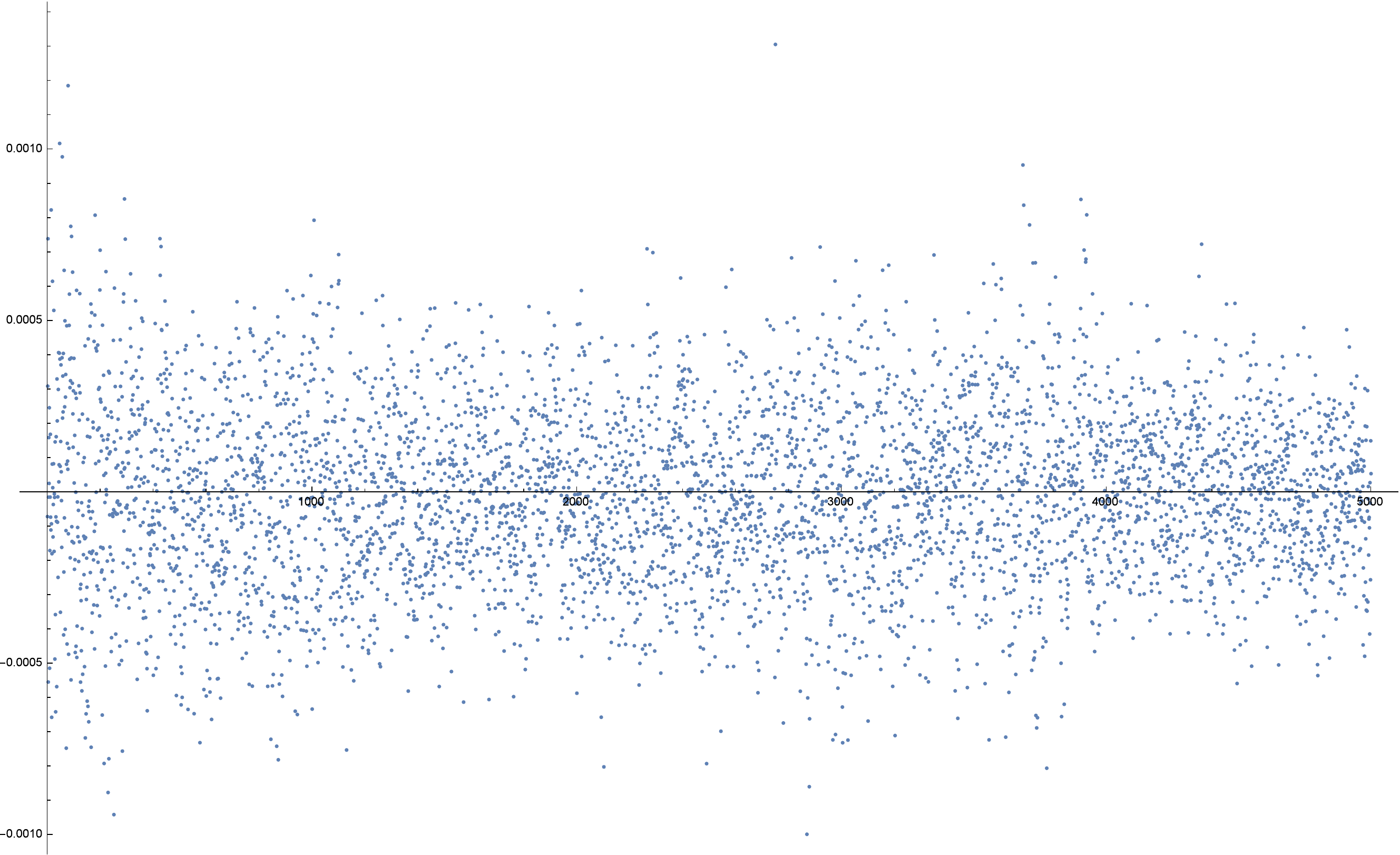}
\includegraphics[width = 0.67\textwidth]{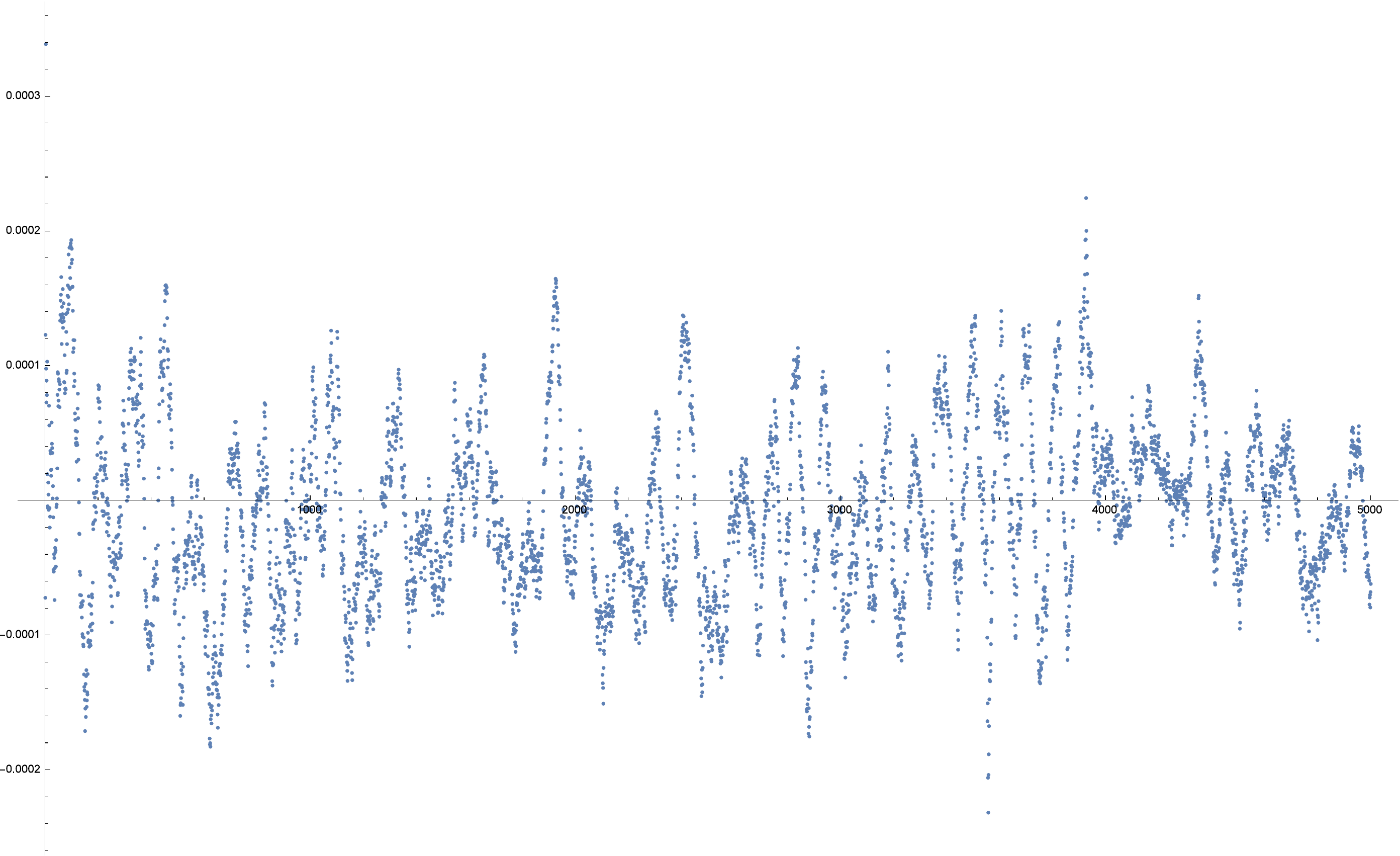}
\includegraphics[width = 0.67\textwidth]{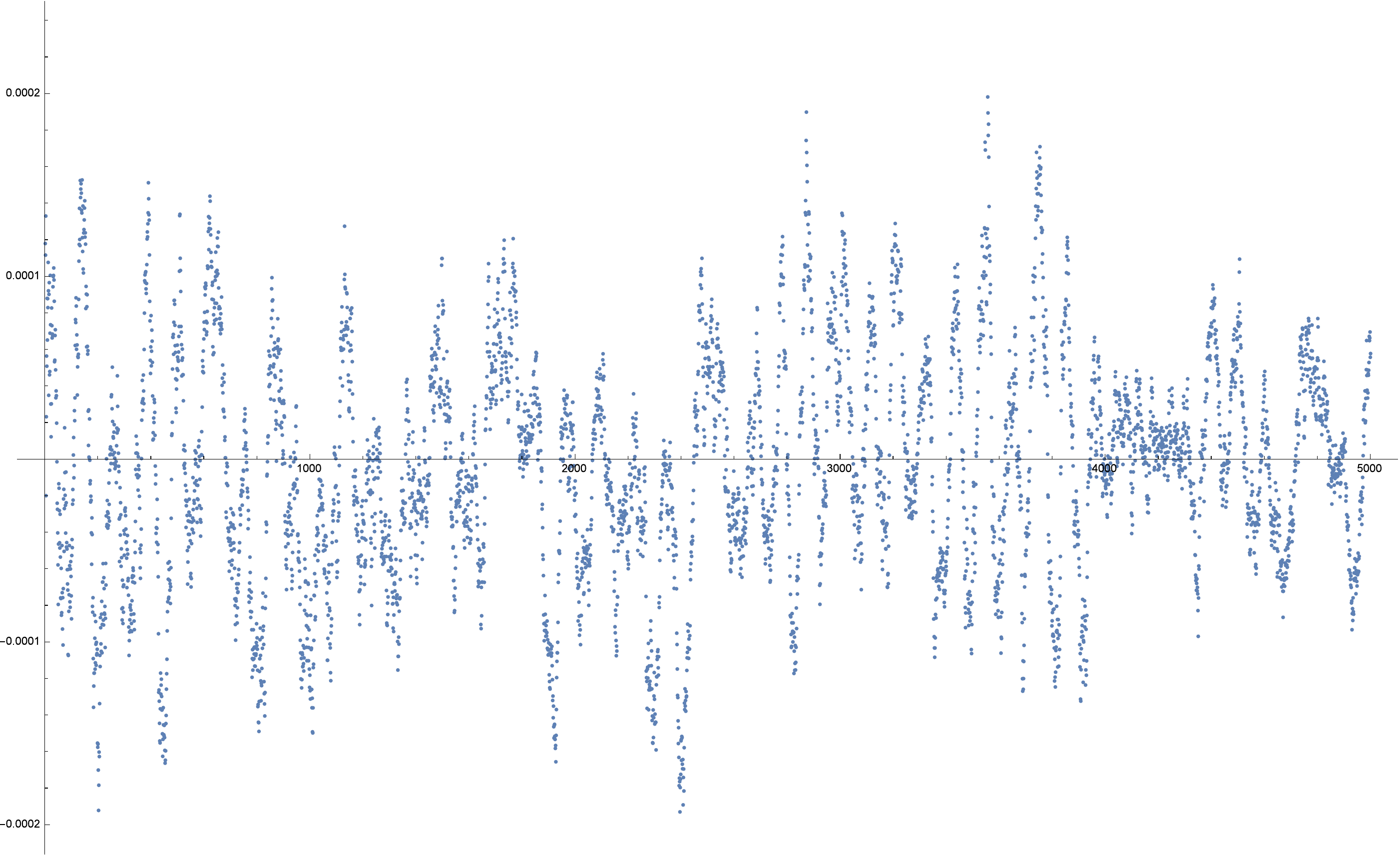}
\caption{First plot: $tf(2,5,d)$ restricted to $d \equiv 0 \pmod{7}, 0 \le d \le 100000.$ Second plot: A moving exponential mean of this series with time constant $20.$ Third plot: Like the second plot, but $d \equiv 3 \pmod{7}.$
}
\label{fig:tf25d-est2}
\end{figure}

\section{Acknowledgements}
The author originally studied the Kolakoski sequence as a part of the ``Math Project Lab'' course at MIT with Yongyi Chen and Michael Yan in Spring 2016. 
The notions $E_{m, n},$ $C_{m, n},$ and several smaller elements of this paper are based on work from this project. The unusual patterns in the correlation function for $d \le 500$ were also observed during thie time.


\begin{thebibliography}{9}


\bibitem{Shen} Shen, Bobby. ``The Kolakoski sequence and related conjectures about orbits.'' 
 \url{https://arxiv.org/abs/1702.08156} [Math.CO]
 
 \bibitem{Suth} Sutherland, Andrew. ``18.783 Elliptic Curves Lecture \# 3.'' \url{https://math.mit.edu/classes/18.783/2017/LectureNotes3.pdf} 
 
 \bibitem{OEIS} 
\url{https://oeis.org/wiki/Density#Asymptotic_density}

\end{thebibliography}
\end{document}